\numberwithin{equation}{section} 
\numberwithin{figure}{section} 
  \theoremstyle{plain}
  \newtheorem*{thm*}{Theorem}
\theoremstyle{plain}
\newtheorem{thm}{Theorem}
  \theoremstyle{definition}
  \newtheorem{defn}[thm]{Definition}
  \theoremstyle{plain}
  \newtheorem{prop}[thm]{Proposition}
 \theoremstyle{definition}
  \newtheorem{example}[thm]{Example}
  \theoremstyle{plain}
  \newtheorem{lem}[thm]{Lemma}
  \theoremstyle{plain}
  \newtheorem{cor}[thm]{Corollary}
\newcommand{\ww}[1]{\mathbb{#1}}
\newcommand{\dd}[1]{\mbox{d}#1}
\newcommand{\germ}[1]{\ww{C}\left\{  #1\right\}  }
\newcommand{\pol}[2]{\ww{C}\left[#1\right]_{#2}}
\begin{document}
\global\long\def\ww#1{\mathbb{#1}}
\global\long\def\dd#1{\mbox{d}#1}
\global\long\def\germ#1{\ww C\left\{  #1\right\}  }

\title{Existence of non-algebraic singularities of differential equation}

\author{Yohann GENZMER, Loïc TEYSSIER}

\date{Juin 2009}

\maketitle
\begin{center}
{\footnotesize Laboratoire I.R.M.A.}\\
{\footnotesize{} Université de Strasbourg (France)}
\par\end{center}{\footnotesize \par}
\begin{abstract}
An algebraizable singularity is a germ of a singular holomorphic foliation
which can be defined in some local chart by a differential equation
with algebraic coefficients. We show that there exist at least countably
many saddle-node singularities of the complex plane that are not algebraizable.
\end{abstract}

\dedicatory{$\copyright$\textbf{2009, Journal of Differential Equations / Éditions
scientifiques et médicales Elsevier SAS.}}

~

We consider differential equations in the complex plane \begin{eqnarray}
A\left(x,y\right)\dd y & = & B\left(x,y\right)\dd x\label{eq:diff}\end{eqnarray}
near an isolated singularity, which can be conveniently located at
$\left(0,0\right)$ by translation. The coefficients $A$ and $B$
are germs of a holomorphic function with a common zero at $\left(0,0\right)$
and no common factor. We denote by $\lambda_{1}$ and $\lambda_{2}$
the eigenvalues of the linear part of the equation at $\left(0,0\right)$.
We will always assume that at least one of those is non-zero, say
$\lambda_{2}\neq0$, and set $\lambda:=\frac{\lambda_{1}}{\lambda_{2}}$.
We recall the following classical result :
\begin{thm*}
\textbf{\emph{(Poincaré and Dulac \cite{Dulac})}} If $\lambda\notin\ww R_{\leq0}$
then there exist two polynomials $P,\, Q$ such that the previous
differential equation is orbitally equivalent through a local analytic
change of coordinates to \begin{eqnarray*}
P\left(x,y\right)\dd y & = & Q\left(x,y\right)\dd x\,.\end{eqnarray*}
 If moreover $\lambda\notin\ww N\cup1/\ww N_{\neq0}$ then we can
choose $P\left(x,y\right)=x$ and $Q\left(x,y\right)=\lambda y$ (\emph{i.e.}
the equation is linearizable). 
\end{thm*}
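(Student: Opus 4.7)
The plan is to follow the classical Poincaré-Dulac scheme: construct a formal normalizing change of coordinates that eliminates every monomial that \emph{can} be eliminated, then prove convergence of the resulting series using the small-divisor estimates available in the Poincaré domain. I would first pass from the 1-form $A\,\dd{y} - B\,\dd{x}$ to the dual vector field $X = A\,\partial_{x} + B\,\partial_{y}$, whose linear part has eigenvalues $\lambda_{1}, \lambda_{2}$. Orbital equivalence of the foliation corresponds to analytic conjugacy of $X$ modulo multiplication by a holomorphic unit, so the only obstruction is governed by the ratio $\lambda = \lambda_{1}/\lambda_{2}$. After a preliminary linear change of coordinates I may assume the linear part is diagonal, namely $X_{0} = \lambda_{1}x\,\partial_{x}+\lambda_{2}y\,\partial_{y}$.

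I would then build the formal conjugation order by order in the homogeneous degree. At degree $N \geq 2$ the homological equation $[X_{0}, H_{N}] = R_{N}$ reduces, on the basis of monomial vector fields $x^{n_{1}}y^{n_{2}}\partial_{x}$, $x^{n_{1}}y^{n_{2}}\partial_{y}$, to a diagonal system whose eigenvalues are $n_{1}\lambda_{1}+n_{2}\lambda_{2}-\lambda_{i}$, $i = 1,2$. Any such monomial is eliminable (modulo the unit-rescaling freedom) as soon as the corresponding divisor is non-zero. The hypothesis $\lambda \notin \ww{R}_{\leq 0}$ places $\lambda_{1}, \lambda_{2}$ in a common open half-plane through the origin, which forces only finitely many pairs $(n_{1},n_{2})\in\ww{N}^{2}$ to resonate; collecting these surviving monomials gives the polynomial normal form $P\,\dd{y} = Q\,\dd{x}$. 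When moreover $\lambda \notin \ww{N}\cup 1/\ww{N}_{\neq 0}$, an inspection of the equations $n_{1}\lambda+n_{2}=\lambda$ and $n_{1}\lambda+n_{2}=1$ (combined with the scalar rescaling) rules out every non-trivial resonance, so the formal normal form reduces to the linear model $x\,\dd{y} = \lambda y\,\dd{x}$.

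There remains to show that the formal conjugating series actually converges, and this is where the technical core sits. Here I would invoke the standard Poincaré estimate: the half-plane assumption on $(\lambda_{1},\lambda_{2})$ yields a constant $c > 0$ such that $|n_{1}\lambda_{1}+n_{2}\lambda_{2}-\lambda_{i}| \geq c(n_{1}+n_{2})$ for all but finitely many pairs $(n_{1},n_{2})$, so the non-resonant divisors stay uniformly away from $0$. Feeding this linear lower bound into a Cauchy majorant argument on a polydisc of sufficiently small radius closes the induction and upgrades the formal conjugation to a genuine holomorphic one. The step I expect to be the main obstacle is precisely this majorant estimate: one has to track the finite set of exceptional resonant monomials together with the unit-rescaling freedom, so that the majorant series truly closes back onto itself rather than merely being formally well-defined.
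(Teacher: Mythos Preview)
The paper does not supply a proof of this statement: it is quoted as a classical result of Poincar\'e and Dulac, with a reference to \cite{Dulac}, and serves only as background motivation for the question of algebraizability. Consequently there is no ``paper's own proof'' to compare your proposal against.

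That said, your outline is the standard Poincar\'e--Dulac argument and is essentially correct as a sketch. The key points---reducing to the diagonal linear part, solving the homological equation degree by degree, identifying the resonant monomials $n_{1}\lambda_{1}+n_{2}\lambda_{2}=\lambda_{i}$, observing that $\lambda\notin\ww R_{\leq0}$ forces the eigenvalues into a common open half-plane so that only finitely many resonances occur, and then running a majorant-series argument using the lower bound $|n_{1}\lambda_{1}+n_{2}\lambda_{2}-\lambda_{i}|\geq c(n_{1}+n_{2})$ for large $|n_{1}|+|n_{2}|$---are all correctly identified. Your remark that the orbital (rather than pointwise) equivalence absorbs a holomorphic unit, so that only the ratio $\lambda$ matters, is also right and is what allows the linear normal form $x\,\dd y=\lambda y\,\dd x$ in the non-resonant case. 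The only place where a fuller write-up would need care is exactly the one you flag yourself: organizing the majorant so that the finitely many surviving resonant terms and the unit rescaling do not spoil the closing of the recursion. This is routine but must be done explicitly; the classical references handle it, and nothing in your plan is wrong.
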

We recall that two germs of differential equation are \emph{orbitally
equivalent} when there exists a germ of biholomorphism conjugating
their solutions. It thus turns out that a generic equation is orbitally
equivalent to a linear, or at least algebraic, equation. Up to now
an open question regarded whether \emph{every }differential equation
is algebraic in some local chart. Such an equation will be called
\emph{algebraizable}. Geometrically, it is equivalent to ask if any
germ of a singularity of foliation in the complex plane can be realized
as some singularity of a foliation of $\mathbb{CP}^{2}$. We aim to
prove that it is not so in the case of a saddle-node ($\lambda=0$),
as was expected in \cite{MR2} for non-linearizable resonant singularities.
Notice that these equations are nonetheless \emph{formally} algebraizable. 
\begin{thm}
There exist at least countably many non-equivalent saddle-node equations
\begin{eqnarray}
x^{2}\dd y & = & \left(y+\mbox{h.o.t}\right)\dd x\label{eq:sn_eqdiff}\end{eqnarray}
which are not algebraizable.
\end{thm}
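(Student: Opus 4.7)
The plan is to contrast the infinite-dimensional nature of the analytic moduli space of saddle-node foliations, known by the work of Martinet--Ramis, with the finite-dimensional nature of algebraic data, and to conclude by a jet-wise dimension count.

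\emph{Step 1: the moduli space.} By Dulac's formal theorem, every germ of the form (\ref{eq:sn_eqdiff}) is formally equivalent to a unique model $x^{2}\dd y = y(1+\lambda x)\dd x$ for some $\lambda\in\mathbb{C}$. Fix such a $\lambda$ and denote by $\mathcal{M}_{\lambda}$ the set of analytic orbital equivalence classes of germs with this formal invariant. Martinet--Ramis identify $\mathcal{M}_{\lambda}$ with a quotient of $\mathrm{Diff}(\mathbb{C},0)\times\mathrm{Diff}(\mathbb{C},0)$ (the pair of horn maps) by a finite-dimensional equivalence. In particular, truncating at jet order $N$ gives a finite-dimensional complex manifold $J^{N}\mathcal{M}_{\lambda}$ whose dimension grows without bound as $N\to\infty$.

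\emph{Step 2: algebraic strata.} For each integer $d\geq 2$, let $\mathcal{E}_{d}$ be the set of pairs $(A,B)$ of polynomials of degree at most $d$ without common factor, such that $A\dd y = B\dd x$ admits an isolated saddle-node singularity with formal invariant $\lambda$ at some point of $\mathbb{C}^{2}$. This is a finite-dimensional complex quasi-projective variety whose dimension is explicitly bounded in $d$. Translation of the singularity to the origin and a linear rescaling bringing the equation into the shape (\ref{eq:sn_eqdiff}) produces a map $\mu_{d}:\mathcal{E}_{d}\to\mathcal{M}_{\lambda}$. The crucial analytic input is that $\mu_{d}$ is holomorphic for the natural Fr\'echet topology on the horn maps, which follows from the holomorphic dependence of the sectorial normalising coordinates on analytic parameters (classical for the formal data, and extended to the full Stokes data by parametric Borel--Laplace summation). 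As a consequence, for every $N$ the composition $J^{N}\!\circ\mu_{d}$ takes values in a complex-analytic subset of $J^{N}\mathcal{M}_{\lambda}$ of dimension at most $\dim\mathcal{E}_{d}$.

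\emph{Step 3: conclusion.} A germ of foliation is algebraizable exactly when its analytic class belongs to $\bigcup_{d\geq 2}\mu_{d}(\mathcal{E}_{d})$, which is a countable union of finite-dimensional analytic subsets of $\mathcal{M}_{\lambda}$. For any $d_{0}$, pick $N$ so large that $\dim J^{N}\mathcal{M}_{\lambda}>\max_{d\leq d_{0}}\dim\mathcal{E}_{d}$; then $\bigcup_{d\leq d_{0}}J^{N}\mu_{d}(\mathcal{E}_{d})$ is a proper analytic subset of $J^{N}\mathcal{M}_{\lambda}$. Letting $d_{0}\to\infty$ shows that algebraizable classes form a meagre subset of $\mathcal{M}_{\lambda}$, whose residual complement contains uncountably many pairwise non-equivalent analytic classes. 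Extracting a countable inequivalent subfamily, each realised by some equation of the form (\ref{eq:sn_eqdiff}), proves the theorem.

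\emph{Main obstacle.} The delicate point is the parametric holomorphy asserted in Step 2: one needs a version of the Martinet--Ramis construction in which the sectorial normalisations, and hence the horn maps, vary holomorphically with the coefficients of $(A,B)$ and with the position of the singularity. The ambiguity of the local chart in which the equation happens to be algebraic is automatically absorbed by the orbital quotient defining $\mathcal{M}_{\lambda}$, and the Baire-type conclusion reduces to an elementary finite-dimensional dimension count once the holomorphic map $\mu_{d}$ is at hand.
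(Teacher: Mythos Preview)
Your strategy matches the paper's: the Martinet--Ramis moduli space is too large to be covered by the countable union of images of the finite-dimensional polynomial strata, and the key analytic input is the holomorphic dependence of the invariants on the coefficients (which the paper also isolates as the main work, proving in particular that Dulac's prenormalisation is strongly analytic). Your ``main obstacle'' paragraph is exactly right.

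However, Step~3 has a genuine gap. The passage ``letting $d_{0}\to\infty$ shows that algebraizable classes form a meagre subset'' does not follow from what precedes it. For each finite $d_{0}$ you choose a jet level $N=N(d_{0})$ making $\bigcup_{d\leq d_{0}}J^{N}\mu_{d}(\mathcal{E}_{d})$ proper in $J^{N}\mathcal{M}_{\lambda}$, but there is no single $N$ that works for all $d$ simultaneously: by surjectivity of the Martinet--Ramis map, large-degree polynomial equations realise \emph{every} $N$-jet. So the finite-dimensional dimension count never closes, and you are forced back to a Baire argument in the infinite-dimensional space $\mathcal{M}_{\lambda}$ itself. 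This is precisely where the difficulty lies: the paper shows that $\mathbb{C}\{h\}$ (hence $\mathrm{Diff}(\mathbb{C},0)$, hence $\mathcal{M}_{\lambda}$) is \emph{not} a Baire space for the natural norm topology, nor for the inductive topology. Your appeal to a ``Fr\'echet topology on the horn maps'' is therefore not available, since the germs have no common radius of convergence.

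The paper's substitute for the missing Baire property is what it calls the \emph{analytic Baire property}: $\mathbb{C}\{h\}$ cannot be written as a countable union of ranges of analytic maps from finite-dimensional domains. This is proved by restricting to the Banach subspace $\mathcal{A}=\{\sum a_{j}z^{j}:\sup_{j}|a_{j}|<\infty\}$ with the sup-norm, which \emph{is} Baire, and showing via a differential/rank argument that each image $\mu_{d}(K)\cap\mathcal{A}$ (for $K$ compact) is closed with empty interior there. That argument replaces your jet count and is the real content of the theorem; your sketch would be complete once you supply it.
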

Our proof is based on Martinet-Ramis' theorem about orbital classification
of such equations, stating that the equivalence classes of all equations
\eqref{eq:sn_eqdiff} under the action of local changes of coordinates
is in one-to-one correspondence with the space of germs $\germ h$.
We will give a more precise statement in Section~\ref{sec:analytic_MR}.
Our argument boils down to the following : since the space of orbitally
equivalent saddle-node equations is in one-to-one correspondence with
a functional space of germs and since this space is {}``big'' then
the trace of all algebraic equations should reasonably be {}``meagre'',
for instance in the sense of Baire. Many problems arise immediately,
one of those being that $\germ h$ cannot be endowed with a topology
which would make it a Baire space while at the same time preserving
the {}``nice'' structure of the set of algebraic equations. Another
problem lies in the fact that $\germ h$ might not be objectively
{}``big'' as it can be the range of a continuous map $\ww R\to\germ h$
and thus set-theoretically equivalent to the field of scalars. Hence
both set theory and topology are not sufficient to guarantee that
the heuristics works, and we must consider {}``analytic Baire properties''.
What makes things work is the fact that Martinet-Ramis' invariant
of classification is analytic with respect to the equation, as was
already known. The main part of our proof regarding this Baire analyticity
property deals with showing that Dulac's prenormalization procedure
is analytic too.\\

What is actually expected is that the \emph{typical }saddle-node equation
is non-algebraizable, \emph{i.e. }the set of non-algebraizable equations
is a $G_{\delta}$-dense subset of all saddle-node equations, not
only that the image of those non-algebraizable equations is a $G_{\delta}$-dense
subset of the space of invariants (which is what we prove here). To
do so one must consider a finer topology on spaces of germs than the
ones used presently and study analyticity and openness of maps from
and into these spaces. This requires a lot of additional technical
work and is currently being carried out. The authors nonetheless believe
this stronger result to be true.

\section{A topology on $\germ{z_{1},z_{2},\ldots,z_{n}}$}

In the sequel we use bold-typed letters to indicate multi-variables
$\mathbf{z}:=\left(z_{1},\ldots,z_{n}\right)\in\ww C^{n}$ or multi-indices
$\mathbf{J}:=\left(j_{1},\ldots,j_{n}\right)\in\ww N^{n}$. We use
the standard notations $\mathbf{J}!:=\prod_{\ell}\left(j_{\ell}!\right)$,
$\left|\mathbf{J}\right|:=\sum_{\ell}j_{\ell}$ and $\mathbf{z}^{\mathbf{J}}:=\prod_{\ell}z_{\ell}^{j_{\ell}}$.

\subsection{Norm on $\germ{\mathbf{z}}$}

~

Let us endow the topological space $\germ{\mathbf{z}}$ with the norm\[
\left|\left|f\right|\right|:=\sum_{\mathbf{J}}\frac{\left|a_{\mathbf{J}}\right|}{\mathbf{J}!}\]
where $f\left(\mathbf{z}\right)=\sum_{\mathbf{J}}a_{\mathbf{J}}\mathbf{z}^{\mathbf{J}}$.
Since the series $f$ is convergent $\left|\left|f\right|\right|$
is well defined and is a norm on the space $\germ{\mathbf{z}}$. Notice
that the space $\left(\germ{\mathbf{z}},\left|\left|.\right|\right|\right)$
is not complete since the sequence $\left(\sum_{\left|\mathbf{J}\right|\leq n}\sqrt{\mathbf{J}!}\mathbf{z}^{\mathbf{J}}\right)_{n\in\ww N}$
has the Cauchy property but is not convergent in the space of convergent
series. It is not even a Baire space. The evaluation $f\mapsto f\left(0\right)$
is continuous as well as the evaluation at $0$ of any derivative.
Hence, the family of projectors $J^{N}$ which associates to $f$
is $N$-jet is continuous.

\subsection{Analytical functions from \textmd{$\mathbb{C}^{p}$} to \textmd{$\germ{\mathbf{z}}$}}

~
\begin{defn}
Let $\Omega$ be a domain of $\ww C^{p}$ for $p\in\ww N_{\neq0}$.
\begin{enumerate}
\item A map $F\,:\,\Omega\to\germ{\mathbf{z}}$ is said to be \emph{strongly
analytic} if the map $\left(\mathbf{x},\mathbf{z}\right)\mapsto F\left(\mathbf{x}\right)\left(\mathbf{z}\right)$
is analytic with respect to the $n+p$ complex variables $x_{1},\cdots,x_{p}$\textbf{\emph{
}}\emph{and} $z_{1},\cdots,z_{n}$ on a neighbourhood of $\Omega\times\left\{ 0\right\} $. 
\item The map $F$ is said to be \emph{analytic} if for any point $\mathbf{x}$
in $\Omega$, there exists a linear map $L\,:\,\mathbb{C}^{p}\rightarrow\germ{\mathbf{z}}$
such that \[
F\left(\mathbf{x}+\mathbf{h}\right)=F\left(\mathbf{x}\right)+L\left(\mathbf{h}\right)+o\left(\mathbf{h}\right)\,.\]

\item A map $G:\ww C\left\{ \mathbf{w}\right\} \mapsto\ww C\left\{ \mathbf{z}\right\} $
is said to be strongly analytic if the image of any analytic family
of $\ww C\left\{ \mathbf{w}\right\} $ with a lower bounded radius
of convergence is an analytic family of $\ww C\left\{ \mathbf{z}\right\} $
with a lower bounded radius of convergence. 
\end{enumerate}
\end{defn}
\begin{prop}
If $F$ is strongly analytic then it is analytic.
\end{prop}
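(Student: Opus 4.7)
The plan is to use the joint holomorphy provided by strong analyticity to write down a convergent Taylor expansion of $(\mathbf{h},\mathbf{z})\mapsto F(\mathbf{x}_{0}+\mathbf{h})(\mathbf{z})$ around $(0,0)$, and to read off the candidate Fréchet derivative $L$ from the terms that are linear in $\mathbf{h}$. Concretely, fix $\mathbf{x}_{0}\in\Omega$; by the strong analyticity hypothesis the function $(\mathbf{h},\mathbf{z})\mapsto F(\mathbf{x}_{0}+\mathbf{h})(\mathbf{z})$ is holomorphic on some polydisc $\{|\mathbf{h}|_{\infty}<\rho\}\times\{|\mathbf{z}|_{\infty}<r\}$, where it is bounded by some constant $M$, and we may expand
\[
F(\mathbf{x}_{0}+\mathbf{h})(\mathbf{z})\;=\;\sum_{\mathbf{I},\mathbf{J}}c_{\mathbf{I},\mathbf{J}}\,\mathbf{h}^{\mathbf{I}}\mathbf{z}^{\mathbf{J}}.
\]
I would then set $L(\mathbf{h})(\mathbf{z}):=\sum_{\ell=1}^{p}h_{\ell}\,G_{\ell}(\mathbf{z})$ with $G_{\ell}(\mathbf{z}):=\sum_{\mathbf{J}}c_{e_{\ell},\mathbf{J}}\mathbf{z}^{\mathbf{J}}$, and show that $L$ lands in $\germ{\mathbf{z}}$ and that the remainder is controlled in $\|\cdot\|$.

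The key analytic tool is Cauchy's inequality, which on the above polydisc gives $|c_{\mathbf{I},\mathbf{J}}|\leq M\rho^{-|\mathbf{I}|}r^{-|\mathbf{J}|}$. The essential computation is then the elementary identity
\[
\sum_{\mathbf{J}}\frac{1}{\mathbf{J}!\,r^{|\mathbf{J}|}}\;=\;\prod_{\ell=1}^{n}\sum_{j\geq 0}\frac{1}{j!\,r^{j}}\;=\;e^{n/r},
\]
which gives $\sum_{\mathbf{J}}|c_{\mathbf{I},\mathbf{J}}|/\mathbf{J}!\leq Me^{n/r}\rho^{-|\mathbf{I}|}$. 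Applied with $|\mathbf{I}|=1$ this yields $\|G_{\ell}\|\leq Me^{n/r}/\rho$, so $G_{\ell}\in\germ{\mathbf{z}}$ and $L$ is a continuous linear map $\mathbb{C}^{p}\to(\germ{\mathbf{z}},\|\cdot\|)$.

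For the remainder $R(\mathbf{h}):=F(\mathbf{x}_{0}+\mathbf{h})-F(\mathbf{x}_{0})-L(\mathbf{h})$, the same estimates, together with Fubini applied to an absolutely convergent double series, give
\[
\|R(\mathbf{h})\|\;\leq\;\sum_{|\mathbf{I}|\geq 2}|\mathbf{h}^{\mathbf{I}}|\sum_{\mathbf{J}}\frac{|c_{\mathbf{I},\mathbf{J}}|}{\mathbf{J}!}\;\leq\;Me^{n/r}\Bigl(\prod_{\ell=1}^{p}\bigl(1-|h_{\ell}|/\rho\bigr)^{-1}-1-\sum_{\ell=1}^{p}|h_{\ell}|/\rho\Bigr),
\]
valid whenever $|\mathbf{h}|_{\infty}<\rho$. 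Since the parenthesised quantity is $O(|\mathbf{h}|_{\infty}^{2})$ as $\mathbf{h}\to 0$, we conclude $\|R(\mathbf{h})\|=o(\mathbf{h})$, which is the required Fréchet differentiability.

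I expect no serious obstacle: the proof is a packaging of Cauchy estimates together with the observation that the norm $\|\cdot\|$ is tailored to the divided-derivative pairing, so that the factorial weight $1/\mathbf{J}!$ exactly absorbs the $r^{-|\mathbf{J}|}$ blowup via the exponential series. The only mild care needed is the justification of rearranging the triple sum over $(\mathbf{I},\mathbf{J})$ and over the Taylor coefficients of each $r_{\mathbf{J}}(\mathbf{h})$, which is immediate from the absolute convergence furnished by the Cauchy bounds.
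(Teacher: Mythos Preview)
Your proof is correct and follows essentially the same strategy as the paper's: Cauchy estimates coming from the joint holomorphy, combined with the key observation that the factorial weights in $\|\cdot\|$ sum the geometric bound $r^{-|\mathbf{J}|}$ to $e^{n/r}$, yielding an $O(|\mathbf{h}|^{2})$ remainder. The only organizational difference is that the paper (working with $n=p=1$) expands $F(x)(z)=\sum_{j}f_{j}(x)z^{j}$ and applies Cauchy twice---once in $z$ to bound $|f_{j}(x)|\leq C\rho^{-j}$, then in $x$ to bound $|f_{j}''|$---whereas you expand jointly in $(\mathbf{h},\mathbf{z})$ and bound all Taylor coefficients at once; the resulting estimates are the same.
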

Notice that there exist analytic maps which are not strongly analytic:
the obstruction comes simply from the non-existence of local uniform
lower bound for the radius of convergence of series on any open ball
of $\germ{\mathbf{z}}$ for $\left|\left|.\right|\right|$. The following
example, due to J. Duval, illustrates that fact. 
\begin{example}
\label{exa:duval}Consider the family of compact sets for $\varepsilon>0$
\begin{eqnarray*}
K_{\varepsilon} & := & \ww D\backslash\left\{ 0<Im\left(z\right)<\varepsilon\right\} \end{eqnarray*}
which is the union of two simply connected, compact and connected
sets $K_{\varepsilon}^{+}$ and $K_{\varepsilon}^{-}$ such that,
say, $K_{\varepsilon}^{\pm}$ intersects $\pm i\ww R_{>0}$. According
to Runge's approximation theorem there exists a sequence of polynomials
$\left(P_{n}^{\varepsilon}\right)_{n\in\ww N}$ which is a uniform
approximation of the function defined by $x\in K_{\varepsilon}^{+}\mapsto\frac{1}{x}$
and $x\in K_{\varepsilon}^{-}\mapsto1$. There exists a slowly converging
sequence $\varepsilon_{n}>0$ such that $\sup_{x\in\ww D}\left|P_{n}^{\varepsilon_{n}}\left(x\right)\right|\leq\sqrt{n}$.
We now form the sequence $P_{n}:=P_{n}^{\varepsilon_{n}}$ and consider
the map:\begin{eqnarray*}
F\,:\, x\in\ww C & \mapsto & \sum_{j\in\ww N}P_{j}\left(x\right)^{j}z^{j}\,.\end{eqnarray*}
The reader can easily prove that $F\left(x\right)\in\germ z$ for
all $x\in\ww C$ and that its radius of convergence is $\left|x\right|$
if $Im\left(x\right)>0$ and equals $1$ otherwise. As a consequence
$F$ cannot be strongly analytic, as $\left(x,z\right)\mapsto F\left(x,z\right)$
is analytic on no neighbourhood of $\left(0,0\right)$, whereas $x\mapsto F\left(x\right)$
is analytic, for \begin{eqnarray*}
\left|\left|F\left(x+h\right)-F\left(x\right)-h\sum_{j\in\ww N}jP_{j}'\left(x\right)P_{j}\left(x\right)^{j-1}z^{j}\right|\right| & \leq & C\left|h\right|^{2}\sum_{j\in\ww N}\frac{\sqrt{j}^{j}}{j!}\end{eqnarray*}
if we require that $x$ belong to a smaller disc $r\ww D$, $0<r<1$,
thanks to Cauchy's formula as will be detailed further down.\end{example}
\begin{proof}
In the proof we assume that $n=p=1$ : the general case can be treated
in much the same way. Since analyticity is a local property, we can
also perform the proof in a neighbourhood of $0\in\mathbb{C}$. Let
us write $F\left(x\right)\left(z\right)=\sum_{j\geq0}f_{j}\left(x\right)z^{j}$.
Since $F\left(x\right)\left(z\right)$ is analytic as a map of two
variables, the series $F\left(x\right)$ are convergent on a common
open disc centered at $x=0$ with radius $2\rho$. The Cauchy formula
ensures that for any $j$\[
f_{j}\left(x\right)=\frac{\left(-1\right)^{j}}{2i\pi}\int_{\gamma}\frac{F\left(x\right)\left(\xi\right)}{\xi^{j+1}}d\xi\]
for any loop $\gamma$ in the disc of convergence. Substituting $\gamma:=\left\{ \left|\xi\right|=\rho\right\} $
yields\[
\rho^{j+1}\left|f_{j}\left(x\right)\right|\leq\left|\left|F\left(x\right)\right|\right|_{\infty,D\left(0,\rho\right)},\]
Since $F\left(x\right)\left(z\right)$ is bounded on $D\left(0,\beta\right)\times D\left(0,\rho\right)$
for some $\beta$, there exists a positive number $C$ such that for
any $j$ 

\[
\left|f_{j}\left(x\right)\right|\leq\frac{C}{\rho^{j}}.\]
Hence on a disc $D(0,\beta')$ with $\beta'<\beta$ we have a control
of the second derivative of the components of $f_{j}\left(x\right)$
\[
\left|f_{j}^{\left(2\right)}\left(x\right)\right|\leq\frac{C^{'}}{\rho^{j}}.\]
As a consequence, we have on a yet smaller disc :

\[
\left|f_{j}\left(x+h\right)-f_{j}\left(x\right)-hf_{j}^{(1)}\left(x\right)\right|\leq C^{''}\frac{1}{\rho^{j}}\left|h\right|^{2}\,.\]
Defining $D_{x}F\left(h\right)$ as $h\sum_{j\geq0}f_{j}^{(1)}\left(x\right)$,
which is a convergent series, yields \[
\left|\left|F\left(x+h\right)-F\left(x\right)-D_{x}F\left(h\right)\right|\right|\leq C^{''}e^{\frac{1}{\rho}}\left|h\right|^{2},\]
which ensures the analyticity of $F$.
\end{proof}

\section{Analytical Baire property of $\germ{\mathbf{z}}$}

We haven't been able to find a suitable {}``nice'' and reasonably
interesting topology on $\germ z$ in order to obtain a Baire space,
and surely it is not possible to do so if we agree on what {}``interesting
topology'' might be... We can prove that $\left(\germ z,\left|\left|\cdot\right|\right|\right)$
is not Baire. But we can also prove that this space cannot be covered
by countably many analytic subspaces, which is the purpose of this
paragraph.
\begin{defn}
~
\begin{enumerate}
\item An \emph{analytic subspace} of $\germ{\mathbf{z}}$ is the range of
an analytic map $F\,:\,\Omega\subset\ww C^{p}\to\germ{\mathbf{z}}$.
\item We say that $\ww C\left\{ \mathbf{z}\right\} $ is an \emph{analytic
Baire space} if it cannot be the union of a countable analytic subspaces.
\end{enumerate}
\end{defn}
Our main result is the following
\begin{thm}
\label{thm:analytic_baire}$\germ{\mathbf{z}}$ is an analytic Baire
space.
\end{thm}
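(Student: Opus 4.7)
The plan is to argue by contradiction, assuming $\germ{\mathbf{z}} = \bigcup_{k \in \ww N} F_{k}(\Omega_{k})$ with each $F_{k}\colon \Omega_{k} \subset \ww C^{p_{k}} \to \germ{\mathbf{z}}$ analytic, and producing an element of $\germ{\mathbf{z}}$ outside every image. The heuristic is that each $F_{k}(\Omega_{k})$ is in essence ``$p_{k}$-dimensional'' while the target has infinitely many independent coordinate projections; a measure-theoretic diagonal argument across the countable family should therefore suffice, even when the parameter dimensions $p_{k}$ are unbounded.

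The first step is to convert the abstract definition of analyticity into something holomorphic and finite-dimensional. For every multi-index $\mathbf{J}$, the coefficient-extraction functional $\phi_{\mathbf{J}}\colon \sum_{\mathbf{K}} a_{\mathbf{K}} \mathbf{z}^{\mathbf{K}} \mapsto a_{\mathbf{J}}$ is continuous linear, since $|a_{\mathbf{J}}| \leq \mathbf{J}!\,\|\cdot\|$. Composing with the complex-linear differential supplied by the definition of analyticity shows that $\phi_{\mathbf{J}} \circ F_{k}\colon \Omega_{k} \to \ww C$ is complex differentiable at every point, hence holomorphic. Fixing an enumeration $(\mathbf{J}_{i})_{i\ge 0}$ of $\ww N^{n}$, I would then form the holomorphic truncation
\[
\sigma_{k} := \bigl(\phi_{\mathbf{J}_{0}},\ldots,\phi_{\mathbf{J}_{p_{k}}}\bigr) \circ F_{k} \colon \Omega_{k} \to \ww C^{p_{k}+1},
\]
whose source has real dimension $2p_{k}$ and target real dimension $2p_{k}+2$. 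The standard lemma that a $C^{1}$-image from lower into higher real dimension is Lebesgue-null (equivalently, Sard's theorem, every point being critical) ensures that $V_{k} := \sigma_{k}(\Omega_{k})$ is a null subset of $\ww C^{p_{k}+1}$.

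The second step is the diagonal construction. Endow $\prod_{i \geq 0} D\bigl(0,2^{-|\mathbf{J}_{i}|}\bigr)$ with the product $\mu$ of normalised Lebesgue measures; any sequence $(c_{i})$ in its support satisfies $|c_{i}| \leq 2^{-|\mathbf{J}_{i}|}$, so $g := \sum_{i} c_{i}\mathbf{z}^{\mathbf{J}_{i}}$ converges on a fixed polydisc and defines an element of $\germ{\mathbf{z}}$. The cylinder set $W_{k} := \{(c_{i}) : (c_{0},\ldots,c_{p_{k}}) \in V_{k}\}$ is $\mu$-null, by Fubini combined with absolute continuity of the first $p_{k}+1$ marginals with respect to Lebesgue measure and the nullity of $V_{k}$. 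Hence $\mu\bigl(\bigcup_{k} W_{k}\bigr) = 0$, and any $(c_{i})$ outside this countable union provides a $g$ that cannot lie in $F_{k}(\Omega_{k})$, for otherwise $\sigma_{k}$ would hit $(c_{0},\ldots,c_{p_{k}}) \in V_{k}$, contradicting $(c_{i}) \notin W_{k}$.

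The main conceptual obstacle is the very first step: justifying that the somewhat abstract notion of analyticity from the preceding definition really does force every coefficient of $F_{k}(x)$ to depend holomorphically on $x$. This is what bridges the functional setting, with its pathologies illustrated by Example~\ref{exa:duval}, to classical finite-dimensional complex geometry, where Sard's theorem and the Lebesgue-null yoga become available. Once this bridge is in place, no single $F_{k}$ can exhaust more than a $2p_{k}$-dimensional set of coefficient tuples, and the measure-theoretic diagonal disposes of the countable family painlessly.
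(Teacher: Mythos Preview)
Your argument is correct and takes a genuinely different route from the paper's. The paper proceeds Baire-categorically: it passes to the Banach subspace $\mathcal{A}=\{\sum a_j z^j : |a_j|\text{ bounded}\}$ with the sup-norm, stratifies each domain so that the maps have maximal rank, covers by compacts, and then shows via the differential lemma (Proposition~\ref{pro:empty_int}) that each image is closed with empty interior in $\mathcal{A}$, contradicting Baire. Your approach is measure-theoretic: you compose with the first $p_k+1$ coefficient functionals to get a holomorphic map $\Omega_k\subset\ww C^{p_k}\to\ww C^{p_k+1}$, observe its image is Lebesgue-null by the elementary $C^1$ dimension count, and then run a product-measure diagonal over a fixed polydisc to avoid all of them at once. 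Your route is shorter and more elementary --- it sidesteps the rank stratification, the auxiliary Banach space, and Proposition~\ref{pro:empty_int} entirely, and the bridge you flag (that coefficient functionals compose with the abstract $\ww C$-linear Fr\'echet derivative to give honest holomorphic functions on $\Omega_k$, hence $C^\infty$) is exactly what makes the Sard-type step legitimate. The paper's approach, on the other hand, yields a topological genericity statement inside $\mathcal{A}$ (the complement is residual there), which is closer in spirit to the conjectured stronger result that non-algebraizable saddle-nodes form a $G_\delta$-dense set.
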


\subsection{Annoying facts about $\germ z$}

~

We begin with proving the following 
\begin{lem}
$\germ z$ is in one-to-one correspondence with $\ww C$.
\end{lem}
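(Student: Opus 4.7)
The plan is to invoke the Cantor--Bernstein theorem, producing injections both ways between $\germ z$ and $\ww C$, and concluding that the two sets have the same cardinality and hence are in one-to-one correspondence.

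First I would handle the easy direction $\ww C\hookrightarrow\germ z$: the map sending $c\in\ww C$ to the constant series $c$ is obviously injective.

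For the reverse injection $\germ z\hookrightarrow\ww C$, I would associate to each convergent series $f(z)=\sum_{j\geq 0}a_{j}z^{j}$ its sequence of Taylor coefficients $(a_{j})_{j\in\ww N}\in\ww C^{\ww N}$, which determines $f$ uniquely. This embeds $\germ z$ into $\ww C^{\ww N}$. A standard cardinal arithmetic computation gives
\[
\left|\ww C^{\ww N}\right|=\left(2^{\aleph_{0}}\right)^{\aleph_{0}}=2^{\aleph_{0}\cdot\aleph_{0}}=2^{\aleph_{0}}=\left|\ww C\right|,
\]
so fixing any bijection $\ww C^{\ww N}\to\ww C$ and composing produces an injection $\germ z\to\ww C$. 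The Cantor--Bernstein theorem then yields a bijection $\germ z\leftrightarrow\ww C$.

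There is no real obstacle here; the point of the lemma is simply to emphasize that pure set theory is powerless to distinguish $\germ z$ from $\ww C$, which motivates the analytic refinement of Baire's category notion introduced in the preceding definition. I would therefore keep the proof short and stress that the bijection produced has no regularity whatsoever (it is not even Borel), so that the need for the analytic Baire framework is apparent.
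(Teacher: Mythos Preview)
Your argument is correct and entirely standard: the injections in both directions together with Cantor--Bernstein immediately give the bijection, and the cardinal computation $\left|\ww C^{\ww N}\right|=2^{\aleph_{0}}$ is unproblematic.

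The paper, however, proves the lemma by a genuinely different and more elaborate route. Instead of producing an injection $\germ z\hookrightarrow\ww C$ via cardinal arithmetic, it builds a \emph{continuous surjection} $\ww R\to\germ z$ (a ``Peano curve'') for the topology induced on $\germ z\subset\ww C^{\ww N}$ by the product topology. The authors observe that $\germ z$ is connected, locally connected, and $\sigma$-compact (covered by the compact sets $A_{p,r}=\{f:\left|a_{j}\right|\leq pr^{j}\}$), and then invoke the Hahn--Mazurkiewicz theorem to realise each piece as a continuous image of $[0,1]$; concatenating gives $\germ z$ as a continuous image of $\ww R$. The set-theoretic bijection is then extracted as a ``weaker consequence''. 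What this buys, beyond the bare cardinality statement, is a sharpening of the paper's moral: not only does set theory fail to separate $\germ z$ from $\ww C$, but even a reasonably natural \emph{topology} on $\germ z$ allows it to be swept out by a single real parameter. Your remark that the bijection you produce is highly irregular is in the same spirit, but the paper's construction makes the point with a map that is actually continuous. Your proof is shorter and perfectly adequate for the lemma as stated; the paper's proof carries extra topological content that feeds the surrounding narrative.
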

This result is a consequence of the existence of a {}``Peano-curve''
in $\germ z$ for some relatively natural topology.
\begin{proof}
The space $\germ z$ is naturally a subset of $\ww C^{\ww N}$, which
can be endowed with the product topology. The induced topology on
$\germ z$ makes this space a connected and locally connected topological
space. Moreover for any $\left(p,r\right)\in\ww N\times\ww Q$ the
subset of $\germ z$ defined by \[
A_{p,r}:=\left\{ f\left(h\right)=\sum_{j\geq0}a_{j}h^{j}\,\,:\,\,\left|a_{j}\right|\leq pr^{j}\right\} \]
is compact. The union $\bigcup_{\ww N\times\ww Q}A_{p,r}$ covers
the whole $\germ z$, which means the latter is $\sigma$-compact
for the topology under consideration. A theorem of Hahn, Mazurkievicz,
Menger, Moore and Sirpie\'nski \cite{Sierpinski} states precisely
that the continuous images of $\left[0,1\right]$ are the compact,
connected and locally connected spaces. Therefore $\germ z$ is a
continuous image of $\mathbb{R}$, and obviously of $\ww C$, for
the above not-too-pathological product topology. A weaker consequence
is that from a purely set-theoretical point of view $\ww C$ and $\ww C\left\{ z\right\} $
are in one-to-one correspondence. 
\end{proof}
Now we show that 
\begin{lem}
$\left(\germ z,\left|\left|\cdot\right|\right|\right)$ is not a Baire
space.\end{lem}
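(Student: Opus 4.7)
The plan is to exhibit $\germ z$ as a countable union of closed nowhere-dense subsets, thereby violating Baire's property. A natural covering is already at hand from the previous lemma: the sets
\[
A_{p,r} := \left\{ f = \sum_{j \geq 0} a_j z^j \in \germ z \,:\, |a_j| \leq p\, r^j \text{ for all } j\right\}
\]
indexed over the countable set $(p,r) \in \ww N \times \ww Q_{>0}$. Their union is all of $\germ z$: any $f \in \germ z$ has some radius of convergence $R>0$, and picking $r \in \ww Q_{>0}$ with $r < 1/R$ yields by Cauchy--Hadamard a uniform bound $|a_j| \leq p\, r^j$ for a suitably large $p \in \ww N$.

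First I would observe that each $A_{p,r}$ is closed for the norm $||\cdot||$. The coefficient functional $f \mapsto a_J$ is continuous since $|a_J| \leq J!\, ||f||$, so the pointwise estimates $|a_j| \leq p\, r^j$ are preserved under norm limits.

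The heart of the argument is to show that $A_{p,r}$ has empty interior in the norm topology. Given $f \in A_{p,r}$ and $\varepsilon > 0$, my plan is to produce a perturbation $g := f + \eta z^J$ outside $A_{p,r}$ with $||g - f|| < \varepsilon$. Choosing $\eta := 3 p\, r^J$ forces the $J$-th coefficient of $g$ to satisfy $|a_J + \eta| \geq 2 p\, r^J > p\, r^J$, hence $g \notin A_{p,r}$; meanwhile $||g - f|| = 3 p\, r^J / J!$, which tends to $0$ as $J \to \infty$ because factorials beat any geometric sequence. Taking $J$ large enough puts $g$ within the required $\varepsilon$-ball of $f$ while knocking it out of $A_{p,r}$.

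These three steps combine to exhibit $\germ z$ as a countable union of closed nowhere-dense sets, so it cannot be Baire. I do not expect any genuine obstacle here: the essential mechanism is that the factorial in the denominator of $||\cdot||$ allows a single coefficient to be algebraically enormous yet metrically negligible, which is at once the reason the Baire property fails and the tool that exhibits its failure.
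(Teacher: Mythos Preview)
Your argument is correct and uses essentially the same decomposition as the paper: the paper's sets $M_N=\{\sum a_jz^j:|a_j|\le N^j\}$ are exactly your $A_{1,N}$, and the closedness argument (continuity of coefficient functionals) is identical. One small slip: in the covering step you need $r>1/R$, not $r<1/R$, so that $\limsup|a_j|^{1/j}=1/R<r$ forces $|a_j|\le pr^j$ for some $p$; with the inequality as you wrote it the bound fails.

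Where your proof differs from the paper is in the empty-interior step, and your version is more elementary. The paper appeals to the Duval example (a Runge-approximation construction) to assert that no $\|\cdot\|$-ball can admit a uniform lower bound on radii of convergence, hence cannot sit inside any $M_N$. Your explicit perturbation $f\mapsto f+3pr^Jz^J$ achieves the same conclusion directly, with no detour through Runge: it makes transparent the mechanism you name at the end, that the factorial weight allows one coefficient to be made arbitrarily large at arbitrarily small norm cost. This is a cleaner route to the same end.
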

\begin{proof}
We consider the following example due to R. Schäfke. Consider the
subspaces\begin{eqnarray*}
M_{N} & := & \left\{ \sum a_{j}z^{j}\,:\,\left|a_{j}\right|\leq N^{j}\right\} \,\,\,,\, N\in\ww N\,.\end{eqnarray*}
Obviously $\germ z=\cup_{N}M_{N}$. Moreover $M_{N}=\cap_{j}\left\{ \left|a_{j}\right|\leq N^{j}\right\} $
is closed as the association $f\mapsto f^{\left(j\right)}\left(0\right)$
is continuous, and its interior is empty as the example \ref{exa:duval}
shows that no neighbourhood of $f\in\germ z$ may admit a uniform
lower bound for the radius of convergence.
\end{proof}
As an inductive space $\mathbb{C}\left\{ z\right\} $ can also be
endowed with the inductive topology : this space becomes complete
but not Baire. In particular this topology cannot be induced by a
metric.

\subsection{Preliminaries}

~

In order to prove Theorem \ref{thm:analytic_baire} we will need to
eventually locate the proof within a Baire space to get a contradiction.
Let $\mathcal{A}$ be the subspace of $\mathbb{C}\left\{ z\right\} $
defined by\[
\mathcal{A}:=\left\{ f\left(z\right)=\sum_{j\geq0}a_{j}z^{j}\,:\,\left|a_{j}\right|\textup{ is bounded}\right\} \]
together with the norm $\left|\left|\cdot\right|\right|_{\infty}^{}$
:\[
\left|\left|f\right|\right|_{\infty}:=\sup_{j}\left|a_{j}\right|\,.\]
 $\left(\mathcal{A},\left|\left|\cdot\right|\right|_{\infty}\right)$
is a complete metric space and is thus a Baire space because it is
isometric to a subspace of $\mathbb{C}^{\mathbb{N}}$ formed by all
bounded sequences equipped with the sup-norm. 
\begin{lem}
\label{lem:still_closed}Let $S$ be a closed set in $\mathbb{C}\left\{ z\right\} $
for the norm $\left|\left|\cdot\right|\right|$. Then $S\cap\mathcal{A}$
is closed in $\mathcal{A}$ for the norm $\left|\left|\cdot\right|\right|_{\infty}^{}$.\end{lem}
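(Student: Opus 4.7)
The plan is to exhibit the inclusion $\iota:(\mathcal{A},||\cdot||_{\infty})\hookrightarrow(\mathbb{C}\{z\},||\cdot||)$ as a continuous map, from which the conclusion follows immediately by observing that $S\cap\mathcal{A}=\iota^{-1}(S)$ is the preimage of a closed set.

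First I would check that the inclusion actually makes sense, that is, $\mathcal{A}\subseteq\mathbb{C}\{z\}$. This is straightforward: if $f(z)=\sum_{j\geq0}a_{j}z^{j}$ with $|a_{j}|\leq M$ for all $j$, then the series converges on the open unit disc, so $f$ is a germ of holomorphic function at the origin.

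Next, and this is really the crux of the argument, I would estimate the norm $||\cdot||$ of the difference of two elements of $\mathcal{A}$ by their $||\cdot||_{\infty}$-distance. Writing $f-g=\sum_{j}(a_{j}-b_{j})z^{j}$ and bounding $|a_{j}-b_{j}|\leq||f-g||_{\infty}$ uniformly in $j$, one gets
\[
||f-g||=\sum_{j\geq0}\frac{|a_{j}-b_{j}|}{j!}\leq||f-g||_{\infty}\sum_{j\geq0}\frac{1}{j!}=e\,||f-g||_{\infty}.
\]
This shows that $\iota$ is Lipschitz, hence continuous.

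With continuity of $\iota$ in hand, the statement is immediate: if $S$ is closed in $(\mathbb{C}\{z\},||\cdot||)$ then $\iota^{-1}(S)=S\cap\mathcal{A}$ is closed in $(\mathcal{A},||\cdot||_{\infty})$. There is no real obstacle here; the only thing to be careful of is not to confuse the two norms and to keep in mind that convergence in $||\cdot||_{\infty}$ on $\mathcal{A}$ is strictly stronger than convergence in $||\cdot||$, whereas the converse would fail (an unbounded coefficient sequence can still give small $||\cdot||$-norm thanks to the factorials in the denominators). This asymmetry is precisely what makes the lemma useful later: closed sets transfer from the ``weak'' space down to the Baire space $\mathcal{A}$ where the contradiction argument of Theorem~\ref{thm:analytic_baire} can be run.
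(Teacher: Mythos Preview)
Your proof is correct and follows essentially the same approach as the paper: both rely on the inequality $\|f-g\|\leq e\,\|f-g\|_{\infty}$ to pass from $\|\cdot\|_{\infty}$-convergence to $\|\cdot\|$-convergence. The paper phrases this via a direct sequential argument, while you package it as continuity of the inclusion and take a preimage, but the content is identical.
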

\begin{proof}
Let $\left(f_{n}\right)$ be a sequence in $S\cap\mathcal{A}$ which
tends to $f$ when $n$ tends to infinity for the norm $\left|\left|\cdot\right|\right|_{\infty}$.
Then $f$ belongs to $\mathcal{A}$ since it is closed. Moreover,
as \[
\left|\left|f_{n}-f\right|\right|\leq e\left|\left|f_{n}-f\right|\right|_{\infty},\]
the sequence is convergent in $\mathbb{C}\left\{ z\right\} $ for
the norm $\left|\left|.\right|\right|$. Since $S$ is closed $f$
must belong to $S$ too.\end{proof}
\begin{lem}
\label{lem:freedom}A family $f_{1},\ldots,f_{n}\in\mathbb{C}\left\{ z\right\} $
is free over $\mathbb{C}$ if, and only if, there exists $p\in\mathbb{N}$
such that their $p$-jets are free over $\mathbb{C}$.\end{lem}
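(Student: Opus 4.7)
The plan is to treat the two implications separately. The implication from jet-independence to linear independence is immediate: if the $p$-jets $J^p(f_1),\ldots,J^p(f_n)$ are free over $\mathbb{C}$ and $\sum_i \lambda_i f_i = 0$ in $\germ z$, then applying the linear projector $J^p$ (which is continuous, as recalled in the paper) yields $\sum_i \lambda_i J^p(f_i)=0$, forcing all $\lambda_i=0$.

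For the non-trivial direction, I would rely on a simple finite-dimensional linear algebra argument. Set $V := \mathrm{Span}_{\mathbb{C}}(f_1,\ldots,f_n) \subseteq \germ z$, which by hypothesis has dimension exactly $n$. For each $p \in \mathbb{N}$ the restriction $J^p|_V \colon V \to \pol z p$ is linear, and I would consider the decreasing chain of subspaces
\[
K_p := \ker\bigl(J^p|_V\bigr) \subseteq V.
\]
Since $V$ is finite-dimensional, this chain necessarily stabilizes: there exists $p_0$ with $K_p = K_{p_0}$ for every $p \geq p_0$.

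It remains to identify the stable value. An element $f \in \bigcap_p K_p$ has all Taylor coefficients at $0$ equal to zero, hence $f=0$ as a germ; thus $\bigcap_p K_p = \{0\}$. Combined with stabilization this forces $K_{p_0}=\{0\}$, so $J^{p_0}|_V$ is injective. An injective linear map sends a basis to a linearly independent family, so $J^{p_0}(f_1),\ldots,J^{p_0}(f_n)$ are free over $\mathbb{C}$, which is the required integer $p := p_0$.

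There is no real obstacle here; the entire argument hinges on the finite-dimensionality of $V$ together with the fact that the only germ with vanishing $N$-jet for every $N$ is the zero germ. The only subtlety worth noting is that finiteness of the family is essential, since an infinite chain of nested subspaces in $\germ z$ need not stabilize, which justifies why the statement is formulated for finitely many functions.
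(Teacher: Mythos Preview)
Your proof is correct. Both implications are handled cleanly, and the descending-chain-of-kernels argument for the nontrivial direction is sound: the chain $K_p=\ker(J^p|_V)$ is indeed decreasing because $J^{p+1}(f)=0$ forces $J^p(f)=0$, it must stabilize by finite-dimensionality of $V$, and the stable value is $\{0\}$ since $\bigcap_p K_p=\{0\}$.

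The paper argues differently. Instead of using the pigeonhole/stabilization of a chain of subspaces, it proceeds by contradiction and compactness: assuming that for every $p$ there is a nontrivial relation $\Lambda_p\in\ww C^n$ among the $p$-jets, it normalizes $\Lambda_p$ to lie on the unit sphere of $\ww C^n$, extracts an accumulation point $\Lambda_\infty$, and then uses continuity of each $J^k$ together with the implication $J^{k+1}(f)=0\Rightarrow J^k(f)=0$ to show that $\Lambda_\infty$ is a nontrivial relation among the $f_j$ themselves. Your argument is purely algebraic and arguably more transparent; the paper's is more analytic in flavor, fitting the surrounding emphasis on continuity of the jet projectors, but requires the extra step of passing to a subsequence and checking that the limit relation works at every truncation level. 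Both exploit the same finiteness (you via $\dim V<\infty$, the paper via compactness of the unit sphere in $\ww C^n$), so neither generalizes beyond finite families.
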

\begin{proof}
Suppose that for any $p\in\mathbb{N}$ there exists a non-trivial
relation \[
\Lambda_{p}:=\left(\lambda_{1,p},\ldots,\lambda_{n,p}\right)\neq0\]
 for the family $\varphi_{p}:=\left(J^{p}\left(f_{1}\right),\ \ldots,\ J^{p}\left(f_{n}\right)\right)$,
that is \begin{eqnarray*}
J^{p}\left(\sum_{j=1}^{n}\lambda_{j,p}f_{j}\right) & = & 0\,.\end{eqnarray*}
Up to rescalling $\Lambda_{p}$ one can suppose that it belongs to
the unit sphere of $\mathbb{C}^{n}$ and so consider some adherence
value $\left(\lambda_{1,\infty},\ldots,\lambda_{n,\infty}\right).$
Because if $J^{k+1}\left(f\right)=0$ then $J^{k}\left(f\right)=0$,
by taking the limit $p\to\infty$ while fixing an arbitrary $k$ we
obtain that $\Lambda_{\infty}$ is a non-trivial relation for $\varphi_{k}$
by continuity of $f\mapsto J^{k}\left(f\right)$, and thus is a non-trivial
relation for $\left(f_{1},\cdots,f_{n}\right)$. 
\end{proof}
\noindent According to this lemma, if $F$ is of maximal rank at
$x$, i.e. its rank is equal to the dimension of the source space,
there exists $N\in\mathbb{N}$ such that the function $J^{N}F$ is
of maximal rank. Since the space of polynomials of maximal degree
$N$ is of finite dimension, the function $J^{N}R$ is locally one-to-one
around $x$. So is the application $F$. Hence the
\begin{cor}
\label{cor:max_rank}Let $F:\Omega\subset\mathbb{C}^{n}\rightarrow\mathbb{C}\left\{ \mathbf{z}\right\} $. \end{cor}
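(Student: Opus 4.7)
The plan is to make precise the heuristic sketched in the paragraph preceding the statement: reduce the injectivity problem to a finite-dimensional setting via the jet projector $J^{N}$, then invoke the classical holomorphic inverse function theorem. The conclusion I am aiming at is that if the analytic map $F:\Omega\subset\ww{C}^{n}\to\germ{\mathbf{z}}$ has differential of maximal rank $n$ at some $x\in\Omega$, then $F$ is locally one-to-one near $x$.

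The first step is to produce a finite-dimensional witness of the injectivity of $D_{x}F$. Writing $v_{i}:=D_{x}F\left(e_{i}\right)\in\germ{\mathbf{z}}$ for the images of the canonical basis of $\ww{C}^{n}$, maximal rank at $x$ amounts exactly to saying that $v_{1},\ldots,v_{n}$ form a $\ww{C}$-free family. Lemma~\ref{lem:freedom} then produces an integer $N$ such that the jets $J^{N}\left(v_{1}\right),\ldots,J^{N}\left(v_{n}\right)$ remain linearly independent, this time inside the finite-dimensional polynomial space $\pol{\mathbf{z}}{N}$.

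The second step is to close the argument through $\Phi:=J^{N}\circ F:\Omega\to\pol{\mathbf{z}}{N}$. Since $J^{N}$ is linear and continuous for $\left|\left|\cdot\right|\right|$ (as the evaluation at $0$ of each partial derivative is continuous, as recalled in the norm subsection), $\Phi$ is analytic as a map into a finite-dimensional target, with differential $D_{x}\Phi=J^{N}\circ D_{x}F$ of full rank $n$ by the previous step. Composing with a linear projection from $\pol{\mathbf{z}}{N}$ onto any $n$-dimensional subspace containing $D_{x}\Phi\left(\ww{C}^{n}\right)$, the classical holomorphic inverse function theorem shows that $\Phi$ is locally injective around $x$, and hence so is $F$: whenever $F\left(a\right)=F\left(b\right)$ close enough to $x$, applying $J^{N}$ gives $\Phi\left(a\right)=\Phi\left(b\right)$ and therefore $a=b$. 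I do not foresee any genuine obstacle, since the real content is bundled into Lemma~\ref{lem:freedom}, whose role is precisely to replace the unavailable inverse function theorem in the non-complete infinite-dimensional space $\germ{\mathbf{z}}$ by the classical one in $\pol{\mathbf{z}}{N}$; the rest is just bookkeeping around the continuity of the jet projector.
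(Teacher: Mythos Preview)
Your argument for the injectivity assertion is correct and is exactly the paper's approach: pass to a sufficiently high jet $J^{N}$ using Lemma~\ref{lem:freedom}, land in a finite-dimensional polynomial space, and apply the classical inverse function theorem there. This is precisely what the paragraph preceding the corollary sketches, and you have filled in the details faithfully.

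However, the corollary has a second item (the case where $D_{x}F$ has maximal rank $p<n$, producing a $p$-dimensional slice $S$ through $x$ on which $F$ has rank $p$ and the same image), and the paper's written proof is devoted entirely to that second part; the first part is regarded as already dispatched by the paragraph you reproduced. You do not address this second claim at all. The paper's argument for it is again a reduction to finite dimension via $J^{N}$: once $J^{N}\circ F$ lands in a finite-dimensional space, one chooses coordinates $\{j_{1},\ldots,j_{p}\}$ on the source for which the partial differential has rank $p$ and takes $S=\{x_{i_{1}}=\cdots=x_{i_{n-p}}=0\}$ for the complementary indices. So while what you wrote is correct, it covers only half of the corollary; to complete it you should add the analogous jet reduction for the non-maximal-rank case.
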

\begin{enumerate}
\item If $D_{x}F$ is of rank $n$ then $F$ is locally one-to-one near
$x$. 
\item If $D_{x}F$ is of maximal rank $p<n$ then there exists a smooth
hypersurface $S$ of dimension $p$ at $x$ such that $\left.F\right|_{S}$is
of rank $p$ and has the same image as $F$. \end{enumerate}
\begin{proof}
The second part of the corollary is proved using the same result in
finite dimension : indeed, if the range of $F$ were some finite dimensional
vector space, one could choose for $S$ the hypersurface $\left\{ x_{i_{1}}=\cdots=x_{i_{n-p}}=0\right\} $
where $D_{\left(x_{j_{1}},\ldots,x_{j_{p}}\right)}F$ is of rank $p$
with $\left\{ 1,\cdots,n\right\} =\left\{ i_{1},\ldots,i_{n-p}\right\} \cup\left\{ j_{1},\ldots,j_{p}\right\} .$
Now if the range of $F$ were $\mathbb{C}\left\{ \mathbf{z}\right\} $,
one applies this argument to $J^{N}F$ for all $N$ big enough. 
\end{proof}
The key point to Theorem 3 is the following proposition :
\begin{prop}
\label{pro:empty_int}Let $F\,:\,\Omega\to\mathbb{C}\left\{ z\right\} $
be continuous, analytic and one-to-one on an open set $\Omega\subset\mathbb{C}^{n}$.
Let $E<\mathbb{C}\left\{ z\right\} $ be any subspace of infinite
dimension and suppose that $D_{x}F$ is of rank $n$ for some $x\in\Omega$.
Then there exist $\delta$ in $E$ and $\varepsilon>0$ such that
for any $0<\left|t\right|<\varepsilon$ the germ $F\left(x\right)+t\delta$
does not belong to $F\left(\Omega\right)$.\end{prop}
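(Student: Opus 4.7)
The plan is to reduce this infinite-dimensional problem to a finite-dimensional one by projecting onto a jet space, and then to apply the classical inverse function theorem. The underlying intuition is that, after truncating to sufficiently high order, the image $F(\Omega)$ near $F(x)$ becomes an $n$-dimensional submanifold of a finite-dimensional space, so a direction in $E$ lying outside its tangent at $F(x)$ should suffice to escape it.

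First I would pick an integer $N$ satisfying two requirements simultaneously. On the one hand, since $D_x F$ has rank $n$, applying Lemma \ref{lem:freedom} to a basis of $\mathrm{Im}\,D_x F$ shows that $J^N F$ also has rank $n$ derivative at $x$ for $N$ large, so $J^N F$ locally embeds $\Omega$ near $x$ as an $n$-dimensional submanifold of $\mathbb{C}^{M_N}$ with tangent $T := D_x(J^N F)(\mathbb{C}^n)$. On the other hand, since $E$ is infinite-dimensional one may fix $n+1$ linearly independent elements of $E$, whose $N$-jets are again linearly independent for large $N$ by Lemma \ref{lem:freedom}; hence $\dim J^N(E) \geq n+1 > \dim T$, which forces the existence of $\delta \in E$ with $J^N \delta \notin T$.

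Next I would apply the inverse function theorem to the analytic map $G : \Omega \times \mathbb{C} \to \mathbb{C}^{M_N}$, $G(y,t) := J^N F(y) - J^N F(x) - t\,J^N \delta$. Its differential at $(x,0)$ sends $(\xi,\tau)$ to $D_x(J^N F)(\xi) - \tau\,J^N \delta$, whose image $T + \mathbb{C}\,J^N \delta$ is $(n+1)$-dimensional by the choice of $\delta$; in particular $D_{(x,0)} G$ is injective. Thus $G$ is locally injective at $(x,0)$ and the equation $G(y,t)=0$ admits $(x,0)$ as an isolated zero, so there is a neighborhood $V \times D(0,\varepsilon_1)$ of $(x,0)$ on which, for $y \in V$ and $0 < |t| < \varepsilon_1$, one has $J^N F(y) \neq J^N F(x) + t\,J^N \delta$ and \emph{a fortiori} $F(y) \neq F(x) + t\delta$.

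The main obstacle will be upgrading this local non-intersection to the global statement $F(x) + t\delta \notin F(\Omega)$. Choosing $V$ with $\overline{V}$ relatively compact in $\Omega$, the continuous function $y \mapsto \|F(y)-F(x)\|$ is strictly positive on the compact set $\overline{V}\setminus V$ by injectivity of $F$, hence bounded below by some $\eta>0$; taking $|t|<\eta/\|\delta\|$ then rules out any coincidence of $F(x)+t\delta$ with a point of $F(\overline{V}\setminus V)$. For $y\in\Omega\setminus\overline{V}$ no such uniform bound need hold, and this last case must be handled by shrinking $\Omega$ to a relatively compact neighborhood of $x$ from the outset --- a reduction consistent with the intended use of the proposition, since only the germ of $F(\Omega)$ at $F(x)$ matters in the application to Theorem \ref{thm:analytic_baire}.
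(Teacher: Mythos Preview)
Your approach is genuinely different from the paper's. The paper argues directly in $\mathbb{C}\{z\}$ by contradiction: if the conclusion failed for \emph{every} $\delta\in E$, then fixing $\delta\neq 0$ one finds $u_n$ with $F(x+u_n)=F(x)+\delta/n$; after arguing that $u_n\to 0$, one expands $F(x+u_n)-F(x)=D_xF(u_n)+o(u_n)$, divides by $\|u_n\|$, and passes to a subsequential limit of $u_n/\|u_n\|$ on the unit sphere of $\mathbb{C}^n$ to obtain $D_xF(u)=\lambda\delta$ with $u\neq 0$ and $\lambda\neq 0$. Thus every nonzero $\delta\in E$ would lie in the $n$-dimensional range of $D_xF$, contradicting $\dim E=\infty$. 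No jet truncation or inverse function theorem enters. Your finite-dimensional reduction via Lemma~\ref{lem:freedom} and the immersion argument for $G$ is a correct alternative for the \emph{local} statement, and has the merit of making explicit why a single $\delta$ works rather than arguing over all of $E$ at once.

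The gap is the one you flag yourself: shrinking $\Omega$ alters the statement, and your argument only yields $F(x)+t\delta\notin F(\Omega')$ for a relatively compact $\Omega'\ni x$, not the proposition as written. (Incidentally, the compact set to treat in your penultimate sentence should be $\overline{\Omega'}\setminus V$, not $\overline{V}\setminus V=\partial V$, which covers nothing outside $V$.) In fairness, the paper's own proof glosses over the very same issue: the step ``any accumulation point $u$ of $u_n$ satisfies $F(x+u)=F(x)$, hence $u_n\to 0$'' tacitly assumes that $(u_n)$ stays in a compact subset of $\Omega-x$. Your justification that ``only the germ of $F(\Omega)$ at $F(x)$ matters'' is not quite the right diagnosis either; what the application to Theorem~\ref{thm:analytic_baire} actually needs is that $R_{j,n}(K_{j,n,q})\cap\mathcal A$ have empty interior for a \emph{compact} $K_{j,n,q}$, and for that the relatively-compact version of the proposition does suffice --- but this is a statement about a fixed compact image, not about a germ at $F(x)$.
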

\begin{proof}
Suppose the claim is false and fix $\delta\in E\backslash\left\{ 0\right\} $.
There exists a sequence $\left(u_{n}\right)_{n\in\mathbb{N}}\subset\mathbb{C}^{n}$
such that, for $n$ large enough, $x+u_{n}\in\Omega$ and\[
F\left(x+u_{n}\right)=F\left(x\right)+\frac{\delta}{n}\,.\]
Any accumulation point $u$ of $u_{n}$ satisfies $F\left(x+u\right)=F\left(x\right)$.
Because $F$ is one-to-one $u$ must vanish, which in turn implies
that $u_{n}$ converges towards zero. Besides, the definition of differentiability
we use implies \[
o\left(u_{n}\right)=\left|\left|D_{x}F\left(u_{n}\right)-\frac{\delta}{n}\right|\right|=\left|\left|u_{n}\right|\right|\left|\left|D_{x}F\left(\frac{u_{n}}{\left|\left|u_{n}\right|\right|}\right)-\frac{\delta}{n\left|\left|u_{n}\right|\right|}\right|\right|.\]
Dividing by $\left|\left|u_{n}\right|\right|$ yields $\left|\left|D_{x}F\left(\frac{u_{n}}{\left|\left|u_{n}\right|\right|}\right)-\frac{\delta}{n\left|\left|u_{n}\right|\right|}\right|\right|=o\left(1\right).$
Now by compactness of the unit sphere of $\mathbb{C}^{n}$ we can
assume that $\frac{u_{n}}{\left|\left|u_{n}\right|\right|}$ tends
to some $u\neq0$ when $n$ tends to infinity. Hence $\frac{\delta}{n||u_{n}||}$
has to tend to some$\lambda\delta$ as $n$ tends to infinity and,
according to the rank assumption, $\lambda\neq0$. As a matter of
consequence

\[
D_{x}F\left(u\right)=\lambda\delta,\]
which cannot be possible for every $\delta$ in $E$, for the image
of the differential map $D_{x}F$ is finite dimensional.
\end{proof}

\subsection{Analytical Baire property of $\mathbb{C}\left\{ z\right\} $ : proof
of Theorem \ref{thm:analytic_baire}}

~

We show here that $\mathbb{C}\left\{ z\right\} $ has an analytical
Baire property by supposing on the contrary that $\mathbb{C}\left\{ z\right\} $
is a countable union of analytic sets : \[
\mathbb{C}\left\{ z\right\} =\bigcup_{n\in\mathbb{N}}\bigcup_{j\in\mathbb{N}}F_{j,n}\left(\Omega_{j,n}\right)\,,\]
where $F_{j,n}$ is a differentiable function defined on an open set
$\Omega_{j,n}$ of $\mathbb{C}^{n}$. Taking if necessary a finite
covering of each $\Omega_{j,n}$, one can assume that $F_{j,n}$ is
of rank $n$ on $\Omega_{j,n}$. Indeed the set of points where $F_{j,n}$
is not of maximal rank is an analytical subset $\Sigma_{j,n}$ of
$\Omega_{j,n}$ locally closed. The analytical set $\Sigma_{j,n}$
admits a decomposition $\Sigma_{j,n}=\cup C_{k}$ where each cell
$C_{k}$ is biholomorphic to an open set of some $\mathbb{C}^{p}$
with $0\leq p<n$ \cite{coste}. Hence we get the following decomposition\[
F_{j,n}\left(\Omega_{j,n}\right)=F_{j,n}\left(\Omega_{j,n}\backslash\Sigma_{j,n}\right)\bigcup_{k}F_{j,n}\left(C_{k}\right)\,.\]
If the rank $p$ of $F_{j,n}$ is strictly smaller than $n$ on $\Omega_{j,n}\backslash\Sigma_{j,n}$
then one can find a finer covering of $\Omega_{j,n}\backslash\Sigma_{j,n}=\bigcup_{k}B_{j,n,k}$
and a family of smooth hypersurfaces $S_{j,n,k}\subset B_{j,n,k}$of
dimension $p$ such that the rank of $\left.F_{j,n}\right|_{S_{j,n,k}}$is
$n$ and $\left.F_{j,n}\right|_{B_{j,n,k}}$ and $\left.F_{j,n}\right|_{S_{j,n,k}}$
has the same image . Now on each cell $C_{k}$ one can seek the points
where $\left.F_{j,n}\right|_{C_{k}}$ is not of maximal rank and do
the same procedure as above. This construction stops after finitely
many steps since at each stage the dimension of the open set we consider
is strictly less than that of the previous stage. Finally since any
open set of $\mathbb{C}^{p}$ is a countable union of compact sets,
we obtain the following decomposition\[
\mathbb{C}\left\{ h\right\} =\bigcup_{n\in\mathbb{N}}\bigcup_{j\in\mathbb{N}}\bigcup_{q\in\mathbb{N}}R_{j,n}\left(K_{j,n,q}\right)\,,\]
where $\Omega_{j,n}=\bigcup_{q\in\mathbb{N}}K_{j,n,q}$ and each $K_{j,n,q}$
is a full compact subset of some $\mathbb{C}^{p}$ with $p\leq n$.\\
The set $R_{j,n}\left(K_{j,n,q}\right)$ is compact and therefore
closed for the topology induced by $\left(\left|\left|\cdot\right|\right|_{k}\right)_{k>0}$.
According to Lemma \ref{lem:still_closed} the set $R_{j,n}\left(K_{j,n,q}\right)\cap\mathcal{A}$
is also closed in $\mathcal{A}$ for the norm $\left|\left|\cdot\right|\right|_{\infty}^{}$.
It is besides of empty interior : since $\mathcal{A}$ is infinite
dimensional if $R_{j,n}\left(x\right)$ belongs to $\mathcal{A}$
we can invoke Proposition \ref{pro:empty_int} to obtain $\delta\in\mathcal{A}$
such that for $t$ small enough\[
R_{j,n}\left(x\right)+t\delta\not\notin R_{j,n}\left(\Omega_{j,n}\right)\,,\]
which ensures that any small ball for the norm $\left|\left|\cdot\right|\right|_{\infty}^{}$
in $\mathcal{A}$ around $R_{j,n}\left(x\right)$ cannot be contained
in $R_{j,n}\left(\Omega_{j,n}\right)$. Finally we obtain the sought
contradiction since then $\mathcal{A}$ can be split into a countable
union of closed subset with empty interior :

\[
\mathcal{A}^{}=\bigcup_{n\in\mathbb{N}}\bigcup_{j\in\mathbb{N}}\bigcup_{q\in\mathbb{N}}R_{j,n}(K_{j,n,q})\cap\mathcal{A}\,,\]
which is impossible since $\mathcal{A}^{}$ is a Banach thus Baire
space.

\section{Analyticity of Martinet-Ramis invariants and proof of the main theorem\label{sec:analytic_MR}}

The tool that we need is a map which to a saddle-node equation, written
in the most general form \eqref{eq:diff}, associates its invariant
of orbital classification. This is the goal of this section, as well
as proving that this map is actually analytic.

Firstly we need to put the general equation \eqref{eq:diff} in a
prepared form; this is done using Dulac's prenormalization procedure
in Section~\ref{sub:Dulac's-procedure}. We will restrict our construction
to those equations whose first topological invariant equals $1$.
Geometrically speaking this invariant is the order of tangency between
the foliation defined by the equation and the separatrix tangent to
the eigenspace associated to the eigenvalue $\lambda_{2}\neq0$. This
defines the stratum $\mathcal{E}_{1}$, studied in Section~\ref{sub:The-stratum}.
After applying Dulac's procedure $\mathcal{D}$ we deal with equations
in the form\begin{eqnarray*}
x^{2}\mbox{d}y & = & \left(y+R\left(x,y\right)\right)\mbox{d}x\,.\end{eqnarray*}
Define the space $\mathcal{M}:=\ww C\times\ww C\times Diff\left(\ww C,0\right)$
the equivalence relation on $\mathcal{M}$ by $\left(\mu,\tau,\phi\right)\sim\left(\tilde{\mu},\tilde{\tau},\tilde{\phi}\right)$
if, and only if, $\mu=\tilde{\mu}$ and there exists $c\in\ww C_{\neq0}$
such that $\phi\left(ch\right)=\phi\tilde{f}\left(h\right)$ and $\tau=c\tilde{\tau}$. 
\begin{thm*}
\textbf{\emph{(Martinet-Ramis, \cite{MR2})}} There exists a map $M\,:\,\mathcal{E}_{1}\to\mathcal{M}$
such that two equations $E$ and $\tilde{E}$ of $\mathcal{E}_{1}$
are orbitally conjugate if, and only if, $M\left(E\right)=M\left(\tilde{E}\right)$.
Moreover this map is onto and if $\mathbf{t}\in\left(\ww C^{n},0\right)\mapsto E_{\mathbf{t}}\in\mathcal{E}_{1}$
is an analytic family of equations written in Dulac's form then $\mathbf{t}\mapsto\mathcal{M}\left(E_{\mathbf{t}}\right)$
is an analytic family too (that is, for all $\mathbf{t}$ one can
choose a representant of $\mathcal{M}\left(E_{\mathbf{t}}\right)$
such that this family is analytic).
\end{thm*}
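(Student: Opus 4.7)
The plan is to follow Martinet--Ramis' original sectorial-normalization strategy, keeping track of how each step depends on auxiliary parameters. First, for an equation $E \in \mathcal{E}_{1}$ written in Dulac's form $x^{2}\dd{y}=(y+R(x,y))\dd{x}$, I would construct a pair of sectorial normalizations on a covering of a punctured neighbourhood of $0$ by two sectors $V^{+},V^{-}$ of opening strictly greater than $\pi$ in the $x$-plane. On each $V^{\pm}$ there exists a sectorial biholomorphism $\Phi^{\pm}$ conjugating $E$ to its formal normal form $x^{2}\dd{y}=y(1+\mu x)\dd{x}$, where $\mu \in \ww C$ is the formal (resonant) invariant. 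This is the classical Hukuhara--Kimura--Matuda/Malmquist result and is established by a Banach fixed-point argument applied to the cohomological equation on a space of sectorially holomorphic functions with prescribed asymptotics at $x=0$.

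Second, the cocycle $\Phi^{+}\circ(\Phi^{-})^{-1}$ is, on each connected component of the overlap $V^{+}\cap V^{-}$, an automorphism of the model equation. Using a canonical first integral of the model, this cocycle is identified on one component with a germ $\phi\in\textup{Diff}(\ww C,0)$ in the coordinate $h$, and on the other with a translation $\tau\in\ww C$. The residual freedom in choosing the $\Phi^{\pm}$ (fixing the strong separatrix $y=0$ and rescaling the transverse coordinate by $c\in\ww C_{\neq 0}$) is precisely the equivalence relation defining $\mathcal{M}$. This defines $M$. The classification is then a formal consequence: an orbital conjugacy $\Theta$ between $E$ and $\tilde E$ transforms $\Phi^{\pm}$ into $\tilde\Phi^{\pm}$ up to sectorial self-equivalences of the model, forcing $M(E)=M(\tilde E)$; conversely, equality of the invariants lets one glue $\Phi^{+}$ and $\Phi^{-}$ into a biholomorphism which is bounded near $x=0$ on each ray and therefore extends holomorphically by Riemann's removable singularity theorem. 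Surjectivity is obtained by the inverse Cousin construction: a cocycle $(\mu,\tau,\phi)$ twists the model equation on the overlap $V^{+}\cap V^{-}$ to produce a holomorphic saddle-node foliation realizing the prescribed invariants.

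The only genuinely new content here, and what I expect to be the main obstacle, is the analytic dependence on the parameter $\mathbf{t}$. The key is to set up the fixed-point argument producing $\Phi^{\pm}$ in a Banach space of sectorially holomorphic functions in $(x,y)$ \emph{and} holomorphic in $\mathbf{t}$, with a contraction constant and a sector of definition that can be chosen uniformly over a small polydisc in $\mathbf{t}$. Standard estimates on the cohomological operator, together with the assumption that $\mathbf{t}\mapsto E_{\mathbf{t}}$ is analytic in the sense of the first section, give the required uniform contraction. Uniqueness of the fixed point then forces joint holomorphy of $(\mathbf{t},x,y)\mapsto \Phi^{\pm}_{\mathbf{t}}(x,y)$ on $U\times V^{\pm}\times(\ww C,0)$ by a Cauchy-formula argument in $\mathbf{t}$.

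Pulling this joint holomorphy through the construction of Step~2, each of the three components $\mu(\mathbf{t})$, $\tau(\mathbf{t})$, $\phi_{\mathbf{t}}\in\germ h$ of $M(E_{\mathbf{t}})$ (for a suitable choice of the $c$-normalization, continuous in $\mathbf{t}$) is an analytic combination of traces and integrals of $\Phi^{\pm}_{\mathbf{t}}$; in particular $(\mathbf{t},h)\mapsto\phi_{\mathbf{t}}(h)$ is jointly holomorphic in a neighbourhood of $U\times\{0\}$, so $\mathbf{t}\mapsto\phi_{\mathbf{t}}$ is strongly analytic in the sense of Definition~1 and hence analytic into $(\germ h,\|\cdot\|)$ by Proposition~2 already proved in the excerpt. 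This provides the analytic family of representants required by the theorem.
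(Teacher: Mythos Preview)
The paper does not prove this statement at all: it is quoted as a result of Martinet and Ramis \cite{MR2} and used as a black box. The paper's own work in Section~\ref{sec:analytic_MR} consists solely in showing that Dulac's prenormalization $\mathcal{D}$ is strongly analytic (Section~\ref{sub:Dulac}), so that the composite $\phi_{MR}\circ\mathcal{D}$ inherits strong analyticity from the cited Martinet--Ramis result.

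Your proposal is therefore not comparable to the paper's treatment, since there is nothing in the paper to compare it to. What you have written is a faithful outline of the original Martinet--Ramis argument (sectorial normalizations via a fixed-point construction, identification of the Stokes cocycle with $(\mu,\tau,\phi)$, surjectivity by an inverse Cousin/realization argument, and parametric analyticity by running the contraction uniformly in $\mathbf{t}$). As a self-contained sketch it is correct in its broad strokes; if you intend it as a replacement for the citation you would need to supply the actual estimates for the contraction on the sectorial Banach spaces and the uniformity in $\mathbf{t}$, which is where the real work lies. But for the purposes of this paper, the appropriate response is simply to cite \cite{MR2}, as the authors do.
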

In other words, once written in Dulac's form the germ-component of
Martinet-Ramis' map, which we will write $\phi_{MR}$, is strongly
analytic with respect to $R$. The aim of this section is to provide
a proof for the :
\begin{thm}
\label{thm:The-Martinet-Ramis-invariant}The complete Martinet-Ramis
map $\left(A\mbox{d}y-B\mbox{d}x\right)\overset{\mathcal{D}}{\longrightarrow}\left(x^{2}\mbox{d}y-\left(y+R\right)\mbox{d}x\right)\overset{\phi_{MR}}{\longrightarrow}\phi\in Diff\left(\ww C,0\right)$
is a strongly analytic association.
\end{thm}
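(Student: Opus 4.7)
My strategy is to reduce the theorem to the strong analyticity of Dulac's prenormalization $\mathcal{D}$ and then invoke the already-quoted strong analyticity of $\phi_{MR}$. Composition of two strongly analytic maps is strongly analytic directly from the third definition in the excerpt: an analytic family in $\mathbf{t}$ with a lower bound on the radius of convergence is mapped by $\mathcal{D}$ to a family with the same property, to which $\phi_{MR}$ may then be applied, and the resulting output family is again analytic in $\mathbf{t}$ with lower-bounded radius. Thus the whole theorem reduces to proving that $\mathcal{D}$ is strongly analytic.

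I would decompose $\mathcal{D}$ into two substeps, each verified to preserve joint analyticity in $(\mathbf{t},x,y)$ on a neighbourhood of $\Omega \times \{0\}$. First, one computes the strong separatrix $x = \sigma(y)$---the unique analytic invariant curve tangent to the eigenspace of $\lambda_2 \neq 0$---and straightens it to $\{x=0\}$ by $(x,y)\mapsto(x-\sigma(y),y)$. Second, the straightened equation takes the shape $A(x,y)\dd{y} = B(x,y)\dd{x}$ with $A$ vanishing along $\{x=0\}$; the hypothesis of being in $\mathcal{E}_1$ translates to $A$ vanishing to order exactly two along this separatrix, so $A = x^2 U(x,y)$ with $U(0,0)\neq 0$. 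Dividing by $U$ and performing a linear rescaling in $x$ and $y$ to normalise the linear part of $B$ delivers the Dulac form $x^2 \dd{y} - (y+R(x,y))\dd{x}$. The operations of the second substep---division by a nonvanishing holomorphic unit and linear scaling---manifestly preserve joint analyticity, so the whole question collapses onto the first substep.

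The main obstacle is therefore producing $\sigma_{\mathbf{t}}$ as a family with a lower bound on the radius of convergence uniform in $\mathbf{t}$ over any compact $K \Subset \Omega$. I would obtain it via a Briot--Bouquet fixed-point argument: rewriting the invariance condition $A(\sigma(y),y) = B(\sigma(y),y)\sigma'(y)$ as an equation $\sigma = T_{\mathbf{t}}(\sigma)$ on a small ball of the Banach algebra of holomorphic functions bounded on a fixed disc $D(0,r)$ with the sup-norm. The non-degeneracy conditions ($\lambda_2 \neq 0$ and $\mathcal{E}_1$-type tangency of order one) being open, both the contraction constant and the radius $r$ of the invariant ball can be chosen locally uniform in $\mathbf{t}$; compactness of $K$ then yields a common disc of convergence for all $\mathbf{t} \in K$. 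The Picard iterates are polynomial expressions in the Taylor coefficients of $A, B$, hence analytic in $\mathbf{t}$, and uniform convergence on $K \times D(0,r)$ upgrades via a Cauchy estimate to joint analyticity of $(\mathbf{t}, y) \mapsto \sigma_{\mathbf{t}}(y)$ on a neighbourhood of $K \times \{0\}$. Propagating this through the second substep yields the strong analyticity of $\mathcal{D}$, and composition with $\phi_{MR}$ concludes the theorem.
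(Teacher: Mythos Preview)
Your overall architecture matches the paper's exactly: reduce to strong analyticity of $\mathcal{D}$, observe that the post-straightening manipulations are routinely analytic, and isolate the separatrix as the only nontrivial step. The paper likewise writes that steps two and three ``should be obvious'' and devotes the actual work to a lemma on the separatrix.

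Two points of divergence are worth flagging. First, your description of the second substep is not quite right. After straightening the separatrix to $\{x=0\}$ one obtains $A_1(x,y)=x\bigl(a_0(y)+\alpha x A_2(x,y)\bigr)$ with $a_0(0)=0$ but $a_0$ not identically zero in general; so $A_1$ does \emph{not} factor as $x^2 U$ with $U$ a unit, and mere division plus linear rescaling does not reach Dulac form. The paper removes the $x a_0(y)$ term by a further change $(x,y)\mapsto\bigl(\tfrac{x}{\alpha}(1+C(y)),y\bigr)$ where $C$ solves a regular linear ODE, and only then divides by a unit. This extra step is still manifestly strongly analytic (explicit formula $C(y)=\exp\!\int_0^y a_0/B_1(0,\cdot)-1$), so your conclusion survives, but your stated mechanism does not.

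Second, for the separatrix you propose a Briot--Bouquet contraction argument, whereas the paper uses a majorant-series argument: it writes the recursion for the Taylor coefficients $s_p$ and dominates it by the solution of an explicit scalar ODE whose convergence radius gives the uniform lower bound. Both routes are standard and correct; yours is perhaps more conceptual and avoids writing out the recursion, while the paper's is more self-contained and makes the uniform bound explicit. Either way the substantive content is the same.
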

Thus all that remains is to show that $\mathcal{D}$ is strongly analytic.
Before investigating this result we begin with giving the proof of
Theorem~1 in the upcoming section.

\subsection{\label{sub:Proof}Proof of the existence of countably many non-algebraizable
saddle-node singularities (Theorem 1)}

First we show that there there exists at least one such non-algebraizable
equation. Suppose on the contrary that any saddle-node singularity
is algebraizable. We define $\mathcal{P}_{d}$ to be the set of all
equations in $\mathcal{E}_{1}$ with polynomial coefficients of maximum
degree $d$ which, as we will see in Corollary~\ref{cor:p_n}, is
an analytic space. Then, according to Theorem\ref{thm:The-Martinet-Ramis-invariant}
the restriction of $\phi_{MR}\circ\mathcal{D}$ to the space of polynomials
must be onto : therefore, we would obtain the decomposition \[
Diff\left(\ww C,0\right)\simeq\mathbb{C}\left\{ h\right\} =\bigcup_{n\in\mathbb{N}}\phi_{MR}\circ\mathcal{D}\left(\mathcal{P}_{n}\right)\]
which would be a countable union of analytical sets. This is impossible
in view of the analytic Baire property of $\mathbb{C}\left\{ h\right\} $
(Theorem~\ref{thm:analytic_baire}). Hence, there exists at least
one saddle-node equation which is not algebraizable. 

Obviously the same argument works for countably many equations as
a point of $\germ h$ is a compact with empty interior.

\subsection{\label{sub:Dulac's-procedure}Dulac's procedure}

Let $\mathcal{E}$ be the set of couples $\left(A,B\right)\in\mathbb{C}\left\{ x,y\right\} \times\mathbb{C}\left\{ x,y\right\} $
such that the matrix \[
\left(\begin{array}{cc}
\frac{\partial A}{\partial x} & \frac{\partial A}{\partial y}\\
\frac{\partial B}{\partial x} & \frac{\partial B}{\partial y}\end{array}\right)\]
has exactly one non-vanishing eigenvalue. One can assume that, up
to a linear change of variables, the linear part of $X_{A,B}=-B\frac{\partial}{\partial x}+A\frac{\partial}{\partial y}$
is diagonal : \begin{eqnarray*}
A\left(x,y\right) & = & o\left(\left|\left|x,y\right|\right|\right)\\
B\left(x,y\right) & = & y+o\left(\left|\left|x,y\right|\right|\right)\,.\end{eqnarray*}
Notice that the latter change of variable depends rationally on the
coefficients of the linear part of $A$ and $B$. In all the sequel
the only changes of variables we allow will be required to preserve
this diagonal form. The existence of a unique analytic solution $x=s\left(y\right)$
(a separatrix of $X_{A,B}$) tangent to the eigenspace $\left\{ x=0\right\} $
at $\left(0,0\right)$ is well known (see \cite{CS} for example).
The other separatrix $y=\hat{s}\left(x\right)$, tangent to $\left\{ y=0\right\} $,
only exists \emph{a priori }at a formal level (and generically this
series, though unique, is divergent). The reader will find in \cite{Dulac2}
the material needed to carry out the complete prenormalization procedure.
What we retain from it is the following steps :
\begin{itemize}
\item Applying the change of coordinates $\left(x,y\right)\mapsto\left(x+s\left(y\right),y\right)$
transforms $X_{A,B}$ into a vector field $X_{A_{1},B_{1}}$ where
\begin{eqnarray*}
A_{1}\left(x,y\right) & \in & x\germ{x,y}\,\,.\end{eqnarray*}

\item It is possible to further orbitally normalize $A_{1}$ to obtain a
new vector field $X_{A_{D},B_{D}}$ such that \begin{eqnarray}
A_{D}\left(x,y\right) & = & x^{k+1}\nonumber \\
B_{D}\left(x,y\right) & = & y+r\left(x\right)+yR\left(x,y\right)\label{eq:dulac_prenorm_bis}\end{eqnarray}
with $r\left(0\right)=r'\left(0\right)=R\left(0,0\right)=0$. The
integer $k\in\ww N_{>0}$ is a topological invariant (but not a complete
topological invariant).
\item We define the map\begin{eqnarray*}
\mathcal{D}\,:\,\left(A,B\right)\in\mathcal{E} & \mapsto & B_{D}-y\in\germ{x,y}\,.\end{eqnarray*}
At this stage this map may not be well defined. We will give a canonical
way of obtaining $\mathcal{D}\left(A,B\right)$ from the original
vector field without ambiguity. We do this in Section~\eqref{sub:Dulac}.
\end{itemize}

\subsection{\label{sub:The-stratum}The stratum $\mathcal{E}_{1}$.}

Denote by $\mathcal{E}_{1}$ the stratum of $\mathcal{E}$ consisting
of equations that can be put under the previous form \eqref{eq:dulac_prenorm_bis}
with $k=1$. 
\begin{prop}
\label{pro:dulac_analytic}The stratum $\mathcal{E}_{1}$ is constructable:
it is the complementary of a dimension $1$ affine subspace of $\mathcal{E}$.\end{prop}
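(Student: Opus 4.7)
The plan is to express the Dulac integer $k$ in terms of a directly computable analytic invariant of the original pair $(A,B)$, namely the Milnor number
\[
\mu(A,B) := \dim_{\mathbb{C}} \mathbb{C}\{x,y\}/(A,B),
\]
via the identity $k = \mu - 1$. Because $\mu$ depends only on the ideal $(A,B)$, it is preserved by local biholomorphisms \emph{and} by multiplication of the pair by a unit, hence is invariant throughout Dulac's normalising procedure: $\mu(A,B) = \mu(A_{D}, B_{D})$. A direct computation in Dulac form then gives $\mu(A_{D}, B_{D}) = k+1$: writing $B_{D} = y + r(x) + y R(x,y) = (1+R)(y - \alpha(x))$, where $\alpha \in x^{2} \mathbb{C}\{x\}$ is the analytic solution of $\alpha(1 + R(x,\alpha)) = -r$ furnished by the implicit function theorem, the ideal $(A_{D}, B_{D})$ coincides with $(x^{k+1},\, y - \alpha(x))$, whose quotient in $\mathbb{C}\{x,y\}$ is isomorphic to $\mathbb{C}\{x\}/(x^{k+1})$ and has dimension $k+1$.

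The second step computes $\mu$ directly from the diagonalised data $A = o(\|(x,y)\|)$, $B = y + o(\|(x,y)\|)$. Since $\partial_{y} B(0,0) = 1 \neq 0$, the implicit function theorem produces a unique analytic graph $y = \beta(x)$ with $B(x, \beta(x)) = 0$, and the vanishing of $\partial_{x} B(0,0)$ forces $\beta(x) = O(x^{2})$. The intersection multiplicity at the origin is then $\mu = \mathrm{ord}_{x=0} A(x, \beta(x))$. Because $A$ has no linear terms, Taylor expansion yields $A(x, \beta(x)) = a_{2,0}\, x^{2} + O(x^{3})$, where $a_{2,0}$ is the coefficient of $x^{2}$ in $A$. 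Consequently $\mu = 2$ exactly when $a_{2,0} \neq 0$, and $\mu \geq 3$ otherwise.

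Combining the two steps, $\mathcal{E}_{1} = \{k = 1\} = \{\mu = 2\} = \{(A,B) \in \mathcal{E} : a_{2,0} \neq 0\}$, and $\mathcal{E} \setminus \mathcal{E}_{1}$ is precisely the zero locus in $\mathcal{E}$ of the continuous linear form $(A,B) \mapsto a_{2,0}$: an affine (indeed linear) subspace of $\mathcal{E}$ cut out by the single linear equation $a_{2,0} = 0$, as asserted by the proposition. The only really delicate point of the argument is the identity $k = \mu - 1$, which rests on the unit-invariance of the Milnor number together with the elementary reduction of the Dulac form described above; once that has been established, extracting $a_{2,0}$ from the $2$-jet of $(A,B)$ is a routine Taylor computation, and one only needs to observe that $(A,B) \mapsto a_{2,0}$ is continuous and linear on $\mathcal{E}$.
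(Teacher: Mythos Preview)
Your proof is correct and reaches exactly the same conclusion as the paper, namely $\mathcal{E}_{1}=\mathcal{E}\setminus\{a_{2,0}=0\}$, but by a genuinely different route. The paper argues directly with the conjugacy: it first straightens the analytic separatrix so that $A_{1}=x\tilde A$, writes $\tilde A(x,y)=ax+by+\cdots$, and then reads off the equation of conjugacy between $X_{x\tilde A,\tilde B}$ and $U X_{x^{k+1},\hat B}$ at the lowest homogeneous degree; comparing the $x^{2}$-coefficients forces $a\neq 0\iff k=1$, and one checks afterwards that $a=A_{2,0}$ in the original coordinates. Your argument instead passes through the Milnor number $\mu(A,B)=\dim_{\mathbb C}\mathbb C\{x,y\}/(A,B)$: you observe that $\mu$ is invariant under biholomorphism and under multiplication by a unit (hence under all steps of Dulac's procedure), compute $\mu=k+1$ in Dulac form via the factorisation $B_{D}=u\cdot(y-\alpha(x))$, and then compute $\mu$ in the diagonalised form via $B=u\cdot(y-\beta(x))$ to get $\mu=\mathrm{ord}_{x}A(x,\beta(x))=2\iff a_{2,0}\neq 0$.

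Both arguments are short; the paper's is more hands-on and self-contained, while yours is more conceptual in that it identifies the Dulac integer $k$ with a classical intersection invariant rather than extracting it from a particular conjugacy equation. A minor remark: your computation $\mu=\mathrm{ord}_{x}A(x,\beta(x))$ tacitly uses that $A$ and $B$ have no common factor (so that $\mu<\infty$); this is part of the standing hypotheses of the paper, so there is no gap, but it is worth making explicit.
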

\begin{proof}
First we apply the change of coordinates $\left(x,y\right)\mapsto\left(x+s\left(y\right),y\right)$
which brings $X_{A,B}$ to $X_{x\tilde{A},\tilde{B}}$. In this situation
the separatrix is straightened to $\left\{ x=0\right\} $. Write $\tilde{A}\left(x,y\right)=ax+by+o\left(\left|\left|x,y\right|\right|\right)$;
we claim that $X_{A,B}$ belongs to $\mathcal{E}_{1}$ if, and only
if, $a\neq0$. On the one hand suppose that there exists a local analytic
change of coordinates $\Psi\left(x,y\right)=\left(\alpha x+C\left(x,y\right),\beta y+D\left(x,y\right)\right)$,
with $C$ and $D$ in $\germ{x,y}_{1}$, defining a conjugacy between
$X_{x\tilde{A},\tilde{B}}$ and some $UX_{x^{2},\hat{B}}$ with $\eta:=U\left(0,0\right)\neq0$.
Then :\begin{eqnarray}
U\left(x+C,y+D\right)\left(\alpha x+C\right)^{2} & = & x\tilde{A}\left(\alpha+\frac{\partial C}{\partial x}\right)+\tilde{B}\frac{\partial C}{\partial y}\,.\label{eq:Dulac_conj_x}\end{eqnarray}
Written for the term of least homogeneous degree this equation becomes,
since $\tilde{B}\left(x,y\right)=y+o\left(\left|\left|x,y\right|\right|\right)$
:\begin{eqnarray*}
\eta\alpha^{2}x^{2} & = & \alpha x\left(ax+by\right)+y\left(\delta x+\gamma y\right)\end{eqnarray*}
where $\delta=\frac{\partial^{2}C}{\partial x\partial y}\left(0,0\right)$
and $\gamma=\frac{1}{2}\frac{\partial^{2}C}{\partial y^{2}}\left(0,0\right)$.
Hence $\alpha\eta=a$, meaning $a\neq0$ as requested. On the other
hand we use Dulac's result : we know that there exists such a $\Psi$
between $X_{x\tilde{A},\tilde{B}}$ and some $UX_{x^{k+1},\hat{B}}$.
If $a\neq0$ then necessarily $k=1$, as can be seen for the analog
of \eqref{eq:Dulac_conj_x} (the term $\left(\tilde{B}-y\right)\frac{\partial C}{\partial y}$
is indeed of homogeneous degree strictly greater than $2$ and thus
cannot cancel $\alpha ax^{2}$ out). To complete the proof we only
have to mention that the condition $a\neq0$ is equivalent to $A_{2,0}\neq0$.
But this is obviously the case : we even have $A_{2,0}=a$ according
to \begin{eqnarray*}
A\left(x+s\left(y\right),y\right) & = & x\tilde{A}\left(x,y\right)+\tilde{B}\left(x,y\right)s'\left(y\right)\end{eqnarray*}
with $s'\left(0\right)=s\left(0\right)=0$. Hence $\mathcal{E}_{1}=\mathcal{E}\backslash\left\{ A_{2,0}=0\right\} $
is constructable. \end{proof}
\begin{cor}
\label{cor:p_n}Let $\pol{x,y}{\leq d}$ be the space of all polynomials
of degree at most $d$ and define\begin{eqnarray*}
\mathcal{P}_{d} & := & \mathcal{E}_{1}\cap\left(\pol{x,y}{\leq d}\times\pol{x,y}{\leq d}\right)\,.\end{eqnarray*}

Then $\mathcal{P}_{d}$ is a constructable set. 
\end{cor}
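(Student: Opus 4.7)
The statement lives within the finite-dimensional complex vector space $V_d := \pol{x,y}{\leq d} \times \pol{x,y}{\leq d}$, so ``constructable'' may be taken in its classical algebro-geometric sense. My plan is to deduce the result directly from Proposition~\ref{pro:dulac_analytic} by checking that each of the two conditions defining $\mathcal{E}_1$ translates to a finite Boolean combination of polynomial equations and non-equations on $V_d$.

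First I would observe that membership in $\mathcal{E}$ constrains only the $1$-jet of the pair $\left(A,B\right)$: the Jacobian of $X_{A,B}$ at the origin has exactly one non-vanishing eigenvalue precisely when its determinant $A_{1,0}B_{0,1}-A_{0,1}B_{1,0}$ vanishes while its trace $A_{0,1}-B_{1,0}$ does not. These are respectively a polynomial equation and a polynomial non-equation in the coefficients parametrising $V_d$, so $\mathcal{E}\cap V_d$ is constructable. Then I would incorporate the additional condition defining $\mathcal{E}_1\subset\mathcal{E}$: by Proposition~\ref{pro:dulac_analytic} it reads $A_{2,0}\neq 0$, where $A_{2,0}$ is the coefficient of $x^2$ in the normalised $A$ after the preliminary \emph{linear} change of variables diagonalising the $1$-jet. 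Even though Dulac's procedure itself calls for the generically divergent separatrix $s\left(y\right)$, the identity $A(x+s(y),y)|_{y=0}=x\tilde{A}(x,0)$ together with $s(0)=0$ shows that the relevant $A_{2,0}$ only reads off the polynomial $A(x,0)$ and is independent of $s$. Since the diagonalisation is given by explicit eigenvector formulae depending rationally on the four linear coefficients, with denominator a power of the trace $A_{0,1}-B_{1,0}$ that is already non-vanishing on $\mathcal{E}$, the coefficient $A_{2,0}$ defines a regular function on $\mathcal{E}\cap V_d$ and its non-vanishing locus is Zariski-open therein.

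Combining the two steps, $\mathcal{P}_d=\mathcal{E}_1\cap V_d$ is the intersection of the constructable set $\mathcal{E}\cap V_d$ with the complement of an algebraic hypersurface, hence constructable. The only point requiring genuine care is ensuring that the dependence of $A_{2,0}$ on the original coefficients is actually algebraic rather than merely analytic; I would therefore explicitly isolate the reduction to $A(x,0)$ before letting the (a priori non-algebraic) separatrix enter the picture. Beyond that, the argument is essentially a direct unwinding of the preceding proposition in the finite-dimensional ambient space.
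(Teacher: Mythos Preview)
Your argument is correct and mirrors the paper's reasoning: the corollary is stated without proof precisely because it follows immediately from Proposition~\ref{pro:dulac_analytic}, and you have simply made explicit the routine verification that the conditions defining $\mathcal{E}$ and $\mathcal{E}_1$ are algebraic in the coefficients of $V_d$. One small slip: the trace of the matrix $\left(\begin{smallmatrix}\partial_x A & \partial_y A\\ \partial_x B & \partial_y B\end{smallmatrix}\right)\big|_{(0,0)}$ is $A_{1,0}+B_{0,1}$, not $A_{0,1}-B_{1,0}$ (you seem to have computed the trace of the linearisation of the vector field $X_{A,B}=-B\partial_x+A\partial_y$ instead); this does not affect the structure of the argument, since either way the condition is a single polynomial non-equation on $V_d$.
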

Particularly $\mathcal{P}_{d}$ is a finite union of smooth analytical
sets.

\subsection{\label{sub:Dulac}Strong analyticity of Dulac's procedure.}

We first begin with building the map $\mathcal{D}$ in a canonical
way. 
\begin{enumerate}
\item As already stated, there exists a unique germ $s\left(y\right)$ such
that $\left\{ x=s\left(y\right)\right\} $ is a separatrix of $X_{A,B}$.
\item Applying the change of coordinates $\left(x,y\right)\mapsto\left(x-s\left(y\right),y\right)$
transforms $X_{A,B}$ into $X_{A_{1},B_{1}}$ where \begin{eqnarray*}
B_{1}\left(x,y\right) & := & B\left(x-s\left(y\right),y\right)\\
A_{1}\left(x,y\right) & := & A\left(x-s\left(y\right),y\right)-B_{1}\left(x,y\right)s'\left(y\right)=:x\left(a_{0}\left(y\right)+\alpha xA_{2}\left(x,y\right)\right)\end{eqnarray*}
with $A_{2}\left(0,0\right)=1$ and $\alpha\neq0$.
\item There exists a unique holomorphic function $y\mapsto C\left(y\right)$
such that $C\left(0\right)=0$ and $\left(x,y\right)\mapsto\left(\frac{x}{\alpha}\left(1+C\left(y\right)\right),y\right)$
transforms $X_{A_{1},B_{1}}$ into $UX_{x^{2},B_{D}}$ where\begin{eqnarray*}
U\left(x,y\right) & := & \frac{a_{0}\left(y\right)}{B_{1}\left(0,y\right)}\frac{B_{1}\left(0,y\right)-B_{1}\left(x,y\right)}{\alpha x}+A_{2}\left(x,y\right)\\
B_{D}\left(x,y\right) & := & \frac{B_{1}\left(\frac{x}{\alpha}\left(1+C\left(y\right)\right),y\right)}{U\left(x,y\right)}.\end{eqnarray*}
This function $C$ is the unique holomorphic solution to the (regular)
linear differential equation with $C\left(0\right)=0$ :\begin{eqnarray*}
B_{1}\left(0,y\right)C'\left(y\right) & = & \left(1+C\left(y\right)\right)a_{0}\left(y\right),\end{eqnarray*}
whose solution is given by $C\left(y\right)=e^{\int_{0}^{y}\frac{a_{0}\left(u\right)}{B_{1}\left(0,u\right)}du}-1$.
We have $U\in\germ{x,y}^{*}$ since $U\left(0,0\right)=A_{2}\left(0,0\right)=1$. \end{enumerate}
\begin{defn}
We define Dulac's map as \begin{eqnarray*}
\mathcal{D}\left(A,B\right) & := & B_{D}-y\in\germ{x,y}_{1}.\end{eqnarray*}

\end{defn}
To prove that the map $\mathcal{D}$ is strongly analytic, it is enough
to prove that each step of the above construction shares this property:
it should be obvious for the second and third steps. It remains to
check that it is also the case for the first step. 
\begin{lem}
The correspondence $\left(A,B\right)\in\mathcal{E}_{1}\mapsto s\left(y\right)\in\mathbb{C}\left\{ z\right\} $
is strongly analytic.\end{lem}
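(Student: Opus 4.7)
The separatrix $x = s(y)$ is characterized by the invariance equation $A(s(y),y) = s'(y)\, B(s(y),y)$ with $s(0)=0$ and (by tangency to the eigenspace $\{x=0\}$) $s'(0)=0$. My plan is to derive a recurrence on the Taylor coefficients of $s$, observe that each coefficient depends analytically on the data $(A,B)$, and then establish a uniform majorant guaranteeing that the resulting series converges on a disc whose radius does not depend on the parameter.

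Writing $s(y) = \sum_{n\geq 2} s_n y^n$ and equating powers of $y$ in the invariance equation, I would use the diagonal form $A(x,y) = \lambda_1 x + O(\|(x,y)\|^2)$, $B(x,y) = \lambda_2 y + O(\|(x,y)\|^2)$ (with $\lambda_2\neq 0$) to obtain a recurrence of the shape
\[
(n\lambda_2 - \lambda_1)\, s_n \;=\; \Pi_n\bigl(s_2,\ldots,s_{n-1};\,\{A_{ij}\}_{i+j\leq n},\,\{B_{ij}\}_{i+j\leq n}\bigr),
\]
where $\Pi_n$ is a universal polynomial in its arguments. The key structural observation is that on the right-hand side the $s_n$ contribution arises only from $\lambda_2 y \cdot n s_n y^{n-1}$, whereas the left-hand side contributes a single $\lambda_1 s_n$ coming from $A_{1,0}s(y)$; all other potential $s_n$ occurrences involve products $s(y)^i y^j$ with $i\geq 2$ and start at order $y^{2i}$, hence cannot hit $y^n$ with an $s_n$ inside. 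The divisor $n\lambda_2 - \lambda_1$ does not vanish in our setting (in the saddle-node stratum one has $\lambda_1 = 0$ outright). Consequently, for an analytic family $(A_t, B_t)$ in $\mathcal{E}_1$ with a uniform lower bound on the radius of convergence, Cauchy's formula makes each $A_{ij}(t), B_{ij}(t)$ analytic in $t$, and $|\lambda_2(t)|$ is bounded below on any parameter-compact; hence every $s_n(t)$ is analytic in $t$.

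To upgrade pointwise analyticity to joint analyticity on a uniform polydisc I would run the classical Briot-Bouquet majorant argument with parameters: uniform sup-bounds $\|A_t\|_{\infty,D(0,\rho)^2},\,\|B_t - \lambda_2 y\|_{\infty,D(0,\rho)^2} \leq M$ yield uniform Cauchy estimates $|A_{ij}(t)|, |B_{ij}(t)| \leq M/\rho^{i+j}$, and feeding these into the recurrence gives, by induction on $n$, an estimate $|s_n(t)| \leq C/\rho'^n$ for suitable $C,\rho'>0$ independent of $t$ on the compact. The sum $\sum_{n\geq 2} s_n(t) y^n$ is then a uniform limit of analytic functions on a fixed polydisc in $(t,y)$, hence jointly analytic, which is the strong analyticity claimed.

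The main difficulty I anticipate is technical bookkeeping rather than any deep obstruction. One must verify cleanly that no unexpected occurrences of $s_n$ slip into $\Pi_n$ (this uses the diagonalization in an essential way, in particular $A_{1,0} = \lambda_1$ and $B_{1,0} = 0$), and that the inductive majorant bound can be tuned to a single geometric rate. The absence of small divisors beyond the explicit $n\lambda_2-\lambda_1$ is itself a structural feature of the hyperbolic nature of the separatrix $x = s(y)$ along the non-degenerate eigendirection, as opposed to the generically divergent formal separatrix $y = \hat s(x)$ which exists on the degenerate eigenspace and which, precisely because of an infinite family of small divisors at resonance, would not admit such a strongly analytic parametrization.
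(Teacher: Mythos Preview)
Your proposal is correct and follows essentially the same strategy as the paper: derive a recurrence for the Taylor coefficients of $s$ from the invariance equation, observe that each $s_n$ is analytic in the parameter, and then run a majorant argument to obtain a radius of convergence that is uniform over the family. The only cosmetic difference is that the paper writes down an explicit majorant ODE (obtained by replacing all coefficients $a_{n,m},b_{n,m}$ by $M\rho^{n+m}$) and appeals to the convergence of its solution, whereas you invoke the Briot--Bouquet machinery and an inductive geometric bound; these are two packagings of the same majorant-series idea.
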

\begin{proof}
It is enough to prove that one can control the disc of convergence
$s$ in terms of parameters depending on $A$ and $B$ . Let $\left(A_{\epsilon},B_{\epsilon}\right)$
be some analytic family in $\mathcal{E}_{1}$ with $\epsilon\in\left(\ww C^{p},0\right)$
The lemma is deduced from the following formal computation. Let us
write \[
s(y)=\sum_{j\geq0}s_{j}y^{j},\quad s_{0}=s_{1}=0\,.\]
Then for all $n\in\ww N$ :\[
s^{n}(y)=\sum_{j\geq0}\underbrace{\left(\sum_{j_{1}+\cdots+j_{n}=j}s_{j_{1}}\cdots s_{j_{n}}\right)}_{\mathcal{S}_{n,j}}y^{j}\]
where for $p\geq j$ we have $\mathcal{S}_{p,,j}=0$. Write \[
A_{\epsilon}\left(x,y\right)=\sum_{n,m}a_{n,m}^{\epsilon}x^{n}y^{m}\,\,\,\,,\, a_{0,0}^{\epsilon}=a_{1,0}^{\epsilon}=a_{0,1}^{\epsilon}=0\]
 so that \begin{eqnarray*}
A_{\epsilon}\left(s_{\epsilon}\left(y\right),y\right) & = & \sum_{n,m}a_{n,m}^{\epsilon}s_{\epsilon}(y)^{n}y^{m}=\sum_{p\geq0}\underbrace{\left(\sum_{j+m=p}\sum_{n\leq j}a_{n,m}^{\epsilon}\mathcal{S}_{n,,j}^{\epsilon}\right)}_{\mathcal{W}(A)_{p}}y^{p}\,.\end{eqnarray*}
The equation defining $s_{\epsilon}$, namely $A_{\epsilon}\left(s_{\epsilon}\left(y\right),y\right)=B_{\epsilon}\left(s_{\epsilon}\left(y\right),y\right)s_{\epsilon}^{'}\left(y\right)$,
thus becomes with a similar notation for $B_{\epsilon}\left(x,y\right)=\sum b_{n,m}^{\epsilon}x^{n}y^{m}$
: \[
\sum_{p\geq0}\mathcal{W}(A_{\epsilon})_{p}y^{p}=\sum_{p\geq0}ps_{p}^{\epsilon}y^{p}+\sum_{p\geq0}\left(\sum_{m+n-1=p}n\mathcal{W}(B_{\epsilon})_{m}s_{n}^{\epsilon}\right)y^{p}\,.\]
After identifying the coefficients in $y^{p}$ we derive\begin{equation}
ps_{p}^{\epsilon}=\mathcal{W}(A_{\epsilon})_{p}+\sum_{m+n=p+1}n\mathcal{W}(B_{\epsilon})_{m}s_{n}^{\epsilon}.\label{eq:reccu_sepx}\end{equation}
Hence, in a standard fashion, for any $p$ we have $\left|s_{p}^{\epsilon}\right|\leq\bar{s_{p}}$,
where $\bar{s_{p}}$ satisfies the same recurrence equation as $s_{p}^{\epsilon}$
except that we set $a_{m,n}=b_{m,n}=M\rho^{m+n}$ , where $M$ is
a constant and $\rho$ a lower bound for the radius of convergence
of $A_{\epsilon}$and $B_{\epsilon}$. Thus $\left|s_{p}^{\epsilon}\right|$
is less or equal than the coefficient $\bar{s}_{p}$ of $\bar{s}$
satisfying\[
\begin{array}{l}
\frac{1}{1-\rho y}\times\frac{1}{1-\rho\bar{s}\left(y\right)}-1-\rho y-\rho\bar{s}\left(y\right)=\bar{s}^{'}\left(y\right)\left(\frac{y}{M}+\frac{1}{1-\rho y}\times\frac{1}{1-\rho\bar{s}\left(y\right)}-1-\rho y-\rho\bar{s}\left(y\right)\right)\end{array}\]
 Since this equation admits a convergent solution with $\bar{s}\left(0\right)=0$,
its radius of convergence is a lower bound for the radius of convergence
of the family $s_{\epsilon}$. 
\end{proof}
\bibliographystyle{plain}
\bibliography{biblio}

\end{document}